\begin{document}
 \def\pd#1#2{\frac{\partial#1}{\partial#2}}
\def\dfrac{\displaystyle\frac}
\let\oldsection\section
\renewcommand\section{\setcounter{equation}{0}\oldsection}
\renewcommand\thesection{\arabic{section}}
\renewcommand\theequation{\thesection.\arabic{equation}}

\newtheorem{thm}{Theorem}[section]
\newtheorem{cor}[thm]{Corollary}
\newtheorem{lem}[thm]{Lemma}
\newtheorem{prop}[thm]{Proposition}
\newtheorem*{con}{Conjucture A}
\newtheorem*{questionA}{Question}
\newtheorem*{thmB}{Theorem B}
\newtheorem{remark}{Remark}[section]
\newtheorem{definition}{Definition}[section]

\title{Global dynamics of competition models with  nonsymmetric nonlocal dispersals when one diffusion rate is small
\thanks{The first author is supported by Chinese NSF (No. 11501207).
The second author is  supported by  NSF  of China (No. 11431005), NSF of Shanghai  (No. 16ZR1409600).} }

\author{Xueli Bai{\thanks{E-mail: mybxl110@163.com}}\\ {Department of Applied Mathematics, Northwestern Polytechnical
University,}\\{\small 127 Youyi Road(West), Beilin 710072,
Xi'an, P. R. China.}\\ Fang Li{\thanks{Corresponding author.
E-mail: lifang55@mail.sysu.edu.cn}}\\ {School of Mathematics, Sun Yat-sen University,}\\{\small No. 135, Xingang Xi Road, Guangzhou, 510275, P. R. China} }

\date{}
\maketitle{}

\begin{abstract}
In this paper, we study the global dynamics of a general $2\times 2$ competition models with nonsymmetric nonlocal dispersal operators.   Our results indicate that local stability implies global stability provided that one of the diffusion rates is sufficiently small. This paper continues the work in \cite{BaiLi2017}, where competition models with symmetric nonlocal operators are  considered.
\end{abstract}

{\bf Keywords}: nonlocal dispersal,competitive Lotka-Volterra system, local stability, global convergence
\vskip3mm {\bf MSC (2010)}: Primary: 45G15, 45M05; Secondary: 45M10, 45M20.


\section{Introduction}
In this paper, we mainly consider $2\times 2$ reaction diffusion systems with nonlocal diffusion of the following type:
\begin{equation}\label{original}
\begin{cases}
u_t= d \mathcal{K}[u]  +u f(x,u,v) &\textrm{in } \Omega\times[0,\infty),\\
v_t= D \mathcal{P}[v]  +v g(x,u,v) &\textrm{in } \Omega\times[0,\infty),\\
u(x,0)=u_0(x),~v(x,0)=v_0(x)  &\textrm{in } \Omega,
\end{cases}
\end{equation}
where $d,D>0$, $\Omega$ is a  bounded domain in $\mathbb R^n$, $n\geq 1$, $\mathcal{K}$ and $\mathcal{P}$ represent nonlocal operators, which will be defined later.

When nonlocal operators are replaced by differential operators, two-component systems like (\ref{original}) allow for a large range of possible phenomena in chemistry, biology, ecology, physics and so on and have been extensively studied.  One of the most famous phenomena is the idea, which was first proposed by Alan Turing,  that a   stable state in the local system can become unstable in the presence of diffusion. This remarkable idea is called ``diffusion-driven instability", which is one of the most classical theories in the studies of pattern formations. Another important phenomena comes from ecology, where random diffusion is introduced to model dispersal strategies \cite{Skellam} and there are tremendous studies in this direction, see the books \cite{CC-book, OL-book}.  Indeed, dispersal strategies of organisms have been a central topic in ecology.
However, when a long range dispersal is considered, nonlocal reaction diffusion equations are commonly used \cite{Fife,Grinfeld,HMMV,Lutscher,Mogilner-E,Turchin}, where the nonlocal operator takes the following form:
$$
\mathcal{L}u :=\int_\Omega k(x,y)u(y)dy-a(x)u(x).
$$

To be more specific, this paper is motivated by the studies of
Lotka-Volterra type weak competition models with spatial inhomogeneity, that is $f(x,u,v)=m(x)- u- c v$, $g(x,u,v)= m(x)- bu-  v$
with $0< bc<1$ in (\ref{original}).
For this case,  $u(t,x)$, $v(t,x)$ are the population densities of two competing species, $d, D>0$ are their dispersal rates, which measure the
total number of dispersal individuals per unit time, respectively, while $m(x)$ is nonconstant and  represents spatial distribution of resources.
This type of models reflects the interactions among dispersal strategies, spatial heterogeneity of resources and interspecific competition abilities on the persistence of species and has received extensive studies from both mathematicians and ecologists for the last three decades. For models with random diffusion, see  \cite{CC98,  HeNi, LamNi, LWW11, Lou}  and the references therein, while for  models with nonlocal dispersals, see \cite{BaiLi2015, BaiLi2017, HNShen2012, LLW14} and the references therein.

Inspired by the nature of this type of models, in \cite{Lou},   an insightful conjecture was proposed and partially verified:
\begin{con}
The locally stable steady state  is globally asymptotically stable.
\end{con}

It is known that this conjecture is true for ODE systems. Recently, for symmetric PDE case,  this conjecture  has been completely resolved in \cite{HeNi}. Moreover, if random diffusion is replaced by symmetric nonlocal operators, this conjecture is also verified in \cite{BaiLi2017}. In the proofs of these results, the symmetry property of operators and the particular form of reaction terms  are crucial.  These naturally  lead us to investigate the system (\ref{original}) with nonsymmetric operators and more  general reaction terms.


Now let us designate the definitions of nonlocal operators in (\ref{original}) and assumptions imposed on $f,g$.
Denote
$$\mathbb X= C(\bar\Omega),\ \mathbb  X_+ =\{ u\in \mathbb X \ | \ u\geq 0\},\ \mathbb  X_{++}=\mathbb  X_+\setminus \{  0\}.
$$
For $\phi\in \mathbb X$, define
\begin{itemize}
\item[\textbf{(D)}] $ \displaystyle\mathcal{K} [\phi] = \int_{\Omega}k(x,y)\phi(y)dy- \phi(x), \  \mathcal{P} [\phi] = \int_{\Omega}p(x,y)\phi(y)dy-  \phi(x),$
\item[\textbf{(N)}] $ \displaystyle\mathcal{K}[\phi] = \int_{\Omega}k(x,y)\phi(y)dy- \int_{\Omega}k(y,x)dy \phi(x), \  \mathcal{P}[\phi] = \int_{\Omega}p(x,y)\phi(y)dy- \int_{\Omega}p(y,x)dy \phi(x)$,
\end{itemize}
where the kernels $k(x, y)$, $p(x,y)$ describe the rate at which organisms move from point $y$ to point $x$. Here the operators  defined in \textbf{(D)} and \textbf{(N)} correspond to nonlocal operator with lethal boundary condition and  no flux boundary condition  respectively.  See \cite{HMMV} for the derivation of  different types of nonlocal operators.

Throughout this paper, unless designated otherwise, we assume that
\begin{itemize}
\item[\textbf{(A0)}]  $k(x,y)$, $p(x,y)\in C(\mathbb R^n\times \mathbb R^n)$ are nonnegative. $k(x,x)>0$, $p(x,x)>0$ in $\mathbb R^n$. Moreover,  $\int_{\mathbb R^n} k(x,y)dy= \int_{\mathbb R^n} k(y,x)dy= \int_{\mathbb R^n} p(x,y)dy= \int_{\mathbb R^n} p(y,x)dy=1$.
\item[\textbf{(A1)}] $f(x,u,v), g(x,u,v), f_u(x,u,v), f_v(x,u,v), g_u(x,u,v), g_v(x,u,v)\in C( \bar\Omega \times \mathbb R_+ \times \mathbb R_+)$.
\item[\textbf{(A2)}] $f_u<0, g_v<0$ in $\bar\Omega \times \mathbb R_+ \times \mathbb R_+$.
\item[\textbf{(A3)}] $f_v<0, g_u<0$ in $\bar\Omega \times \mathbb R_+ \times \mathbb R_+$.
\item[\textbf{(A4)}] There exists $M>0$ such that $f(x,u,v)<0$ for $u\geq M$,  $g(x,u,v)<0$ for $v\geq M$.
\item[\textbf{(A5)}] $f_v g_u < f_u g_v$ in $\bar\Omega \times \mathbb R_+ \times \mathbb R_+$.
\end{itemize}

Obviously, the reaction terms $f(x,u,v)=m(x)- u- c v$, $g(x,u,v)= m(x)- bu-  v$  satisfy \textbf{(A2)}-\textbf{(A4)} provided that $m\in L^{\infty}(\Omega)$  and the assumption \textbf{(A5)} corresponds to $bc<1$. Moreover, different from PDE case, for models with nonlocal operators,  the optimal regularity of solutions is at most the same as the regularity of the reaction terms. Hence, \textbf{(A1)} is imposed  to guarantee that the solutions could be continuous in space variable.

To better demonstrate our main results and techniques, some explanations are in place. Let $(U(x),V(x))$  denote a nonnegative steady state of (\ref{original}),  then   there are {\it at most} three possibilities:
\begin{itemize}
\item $(U,V)= (0,0)$ is called a {\it trivial steady state}.
\item $(U,V)=(\theta_d, 0)$ or $(U,V)=(0, \eta_D)$ is called a {\it semi-trivial steady state}, where $\theta_d$, $\eta_D$ are the positive solutions to single-species models
    \begin{equation}\label{singled}
    d \mathcal{K}[U]  +U f(x, U, 0)=0,
    \end{equation}
    and
    \begin{equation}\label{singleD}
     D \mathcal{P}[V]  +V g(x,0, V)=0
    \end{equation}
    respectively.
\item $U>0,\ V>0$, and we call $(U,V)$ a {\it coexistence/positive steady state}.
\end{itemize}

{\it The  main result} in this paper  gives a  classification of  the global dynamics to the competition system (\ref{original})  provided that one diffusion rate is small.

\begin{thm}\label{thm-main}
Assume that \textbf{(A0)}-\textbf{(A5)} hold. Also assume that (\ref{original}) admits two semi-trivial steady states $(\theta_d, 0)$ and $(0,\eta_D)$. Then for the global dynamics of the system (\ref{original}) with nonlocal operators defined in \textbf{(D)} or \textbf{(N)}, we have the following statements provided that $d$ is sufficiently small:
\begin{itemize}
\item[(i)] If $\mu_0>0$, $\nu_D^0>0$ and in addition $k(x,y)>0$ for $x, y\in \bar\Omega$, then the system (\ref{original}) admits a unique positive steady state in $\mathbb X \times \mathbb X$, which is globally asymptotically stable relative to $\mathbb X_{++} \times \mathbb X_{++}$;
\item[(ii)] If $\mu_0>0$ and $\nu_D^0<0$, then $(0,\eta_D)$ is globally asymptotically stable relative to $\mathbb X_{++} \times \mathbb X_{++}$;
\item[(iii)] If $\mu_0<0$, then $(\theta_d, 0)$ is globally asymptotically stable relative to $\mathbb X_{++} \times \mathbb X_{++}$.
 \end{itemize}
Here $\mu_0$ and $\nu_D^0$ are defined in (\ref{mu0nu0}).
\end{thm}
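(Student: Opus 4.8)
The natural setting is the theory of monotone (competitive) dynamical systems combined with a singular perturbation analysis as $d\to0$; this is the same overall philosophy as in \cite{HeNi} and \cite{BaiLi2017}, but the nonsymmetry of $\mathcal K,\mathcal P$ forces us to replace every variational argument by a monotonicity argument anchored on \textbf{(A5)}. \emph{Step 1 (monotone framework).} First I would record well-posedness and dissipativity of (\ref{original}) on $\mathbb X_+\times\mathbb X_+$: global existence and uniform bounds follow from \textbf{(A1)} and \textbf{(A4)} together with the comparison principle for the nonlocal operators in \textbf{(D)}/\textbf{(N)}. By \textbf{(A2)}--\textbf{(A3)} the induced semiflow is strongly monotone for the competitive order $(u_1,v_1)\preceq(u_2,v_2)\Leftrightarrow u_1\le u_2,\ v_1\ge v_2$, and \textbf{(A4)} makes it point dissipative, so every positive orbit is attracted into the order interval $[(0,\eta_D),(\theta_d,0)]$. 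I would then invoke the abstract trichotomy for two-species competition (Hsu--Smith--Waltman type), which reduces the problem to the linear stabilities of the two semi-trivial states (read off from the signs of $\mu_0,\nu_D^0$) together with the existence/uniqueness/stability of coexistence states. What distinguishes \emph{global} from merely \emph{local} stability is precisely the uniqueness of coexistence and the exclusion of bistability, and this is what the smallness of $d$ must buy.

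\emph{Step 2 (singular limit and reduction via (A5)).} As $d\to0$ the single-species state $\theta_d\to\theta_0$ with $f(x,\theta_0,0)=0$, and the invasion eigenvalues converge, so that for $d$ small the signs of $\mu_0,\nu_D^0$ genuinely govern the two stabilities. For coexistence, I would take any family of positive steady states $(U_d,V_d)$, use the uniform bounds from \textbf{(A4)} to extract a limit $(U_0,V_0)$, and observe that the $u$-equation degenerates to $f(x,U_0,V_0)=0$. Since $f_u<0$ by \textbf{(A2)} this solves uniquely for $U_0=\mathcal F(x,V_0)$, and substituting into the $v$-equation yields a \emph{scalar} nonlocal problem
\[
D\mathcal P[V_0]+V_0\,G(x,V_0)=0,\qquad G(x,V):=g\big(x,\mathcal F(x,V),V\big).
\]
The decisive computation is that $G$ is strictly decreasing in $V$: implicit differentiation gives $\mathcal F_V=-f_v/f_u$ and hence $G_V=(f_u g_v-f_v g_u)/f_u$, which is negative exactly because \textbf{(A5)} makes the numerator positive while $f_u<0$. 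Thus the reduced equation is of logistic type, so it has \emph{at most one} positive solution, with a clean threshold dictated by the sign of the principal eigenvalue of $D\mathcal P+g(x,\theta_0,0)$, i.e. by $\mu_0$, and that solution is nondegenerate (the linearization inherits the strict sign of $G_V$). This is the heart of the argument and the place where nonsymmetry is absorbed.

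\emph{Step 3 (back to small $d$ and case analysis).} From the uniqueness and nondegeneracy of the limit I would upgrade to positive $d$: a topological-degree (or Lyapunov--Schmidt) argument shows that for all sufficiently small $d$ the system has \emph{at most one} coexistence state, which is linearly stable, and bistability is excluded. Feeding this into the trichotomy of Step 1 then yields all three cases at once. If $\mu_0>0,\ \nu_D^0>0$ the reduced state is a genuine, unique, stable coexistence, hence globally asymptotically stable (case (i); here $k>0$ on $\bar\Omega$ supplies the strict positivity and irreducibility needed for strong monotonicity). If $\mu_0>0,\ \nu_D^0<0$ the reduced positive state collapses onto $\eta_D$ with $U_0\equiv0$, so there is no genuine coexistence and $(0,\eta_D)$ is globally stable (case (ii)). If $\mu_0<0$ the reduced problem admits only $V_0\equiv0$, excluding $v$ and leaving $(\theta_d,0)$ as the global attractor (case (iii)).

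\emph{Main obstacle.} I expect the genuine difficulty to lie in Step 3: the limit $d\to0$ is singular, the dispersal term $d\mathcal K[u]$ degenerates, and, unlike the Laplacian, the nonlocal operators are not smoothing, so there is no compactness to lean on and the implicit function theorem is not directly available. Converting the uniqueness and nondegeneracy of the limiting scalar problem into uniqueness, stability, and the exclusion of bistability at small positive $d$ therefore requires uniform-in-$d$ a priori estimates and a careful degree/contraction argument carried out with no energy functional, since the kernels are nonsymmetric. Establishing at the outset that the non-smoothing nonlocal semiflow still fits the abstract competitive framework (precompactness of orbits, or an order-compactness substitute) is the accompanying technical hurdle.
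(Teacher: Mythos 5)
Your overall skeleton --- monotone trichotomy, reduction as $d\to0^+$ to the scalar problem $D\mathcal P[v]+v\,g(x,F_+(x,v),v)=0$ whose strict monotonicity in $v$ comes from \textbf{(A5)}, then transfer back to small $d>0$ --- is exactly the paper's architecture (Theorem \ref{thm-monotone}, Lemma \ref{lm-g-decreasing}, Proposition \ref{prop-d=0-single}). But two of your steps do not survive scrutiny. First, in Step 2 you ``extract a limit $(U_0,V_0)$'' from the uniform $L^\infty$ bounds; since the nonlocal operators are not smoothing, bounded families of steady states have no convergent subsequence in $\mathbb X$, so this extraction is unavailable. The paper replaces it by explicit upper/lower solution sandwiching: Theorem \ref{thm-asymptotic} shows that \emph{any} positive steady state $(u_d,v_d)$ satisfies $(U_0-C_2d)_+<u_d<U_0+C_2\sqrt d$ and $(V_0-C_3\sqrt d)_+<v_d<V_0+C_3d$, where $V_0$ is constructed independently by monotone iteration, not by passing to a limit.

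The genuine gap is your Step 3, and you concede it yourself in the ``main obstacle'' paragraph: topological degree, Lyapunov--Schmidt and the implicit function theorem all require exactly the compactness/smoothing that is absent here, so ``uniqueness and nondegeneracy of the limit problem implies uniqueness and stability for small $d$'' is asserted, not proved --- and this transfer is the entire content of the theorem. The paper's actual mechanism is different in kind. For (ii) and (iii), nonexistence of coexistence states at small $d$ is obtained by contradiction through the min--max characterization of the principal spectral value (Theorem \ref{thm-eigenv-general}): testing with $\phi\equiv 1$ in case (ii) forces $0\le\max_{\bar\Omega}f(x,0,\eta_D)=\nu_D^0<0$, and testing with $v_i$ in case (iii), combined with the estimate (\ref{pf-thm-i-ud}) (which holds regardless of the sign of $\mu_0$), forces $\mu_0\ge 0$. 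For (i), uniqueness is proved by a bespoke contradiction argument: normalize the difference of two ordered coexistence states into $(\phi_i,\psi_i)$, and use the spectral characterization plus \textbf{(A5)} to drive $\psi_i$ and $\phi_i$ to zero, contradicting the normalization. Two ingredients there have no counterpart in your proposal: the hypothesis $k(x,y)>0$ on $\bar\Omega\times\bar\Omega$ is used not for ``strong monotonicity/irreducibility'' (that already follows from \textbf{(A0)}) but to guarantee $k*U_0>0$ everywhere, which yields the key decay (\ref{pf-thm-obstacle}) of the coefficient $\bigl(k*u_i/u_i-f_u\,u_i^*/d_i\bigr)^{-1}$; and an open--closed connectedness argument is needed to upgrade the pointwise vanishing $\psi_i(x_i)\to0$ to $p*\psi_i\to0$ in all of $\mathbb X$, precisely because no compactness is available to do this for you.
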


Based on the definitions of $\mu_0$ and $\nu_D^0$ and Lemma \ref{lm-d=0}, indeed Theorem \ref{thm-main} verifies \textbf{Conjecture A} for more general  reaction-diffusion systems provided that one diffusion rate is small.

In this paper, we only  demonstrate the proof of Theorem \ref{thm-main} for the nonlocal operators defined in \textbf{(D)}, since the proof for nonlocal operators defined in  \textbf{(N)} is  almost the same.

It seems that the extra condition that $k(x,y)>0$ for $x, y\in \bar\Omega$ in Theorem \ref{thm-main}(i) is imposed for technical reasons. However, it remains unknown that whether the complexity of nonsymmetric operators could result in multiple positive steady states under the assumption \textbf{(A5)}.  We will return to this topic in future work.

This paper is organized as follows. In Section 2, some useful properties related to single equations, nonlocal eigenvalue problems and monotone systems are prepared. Then the limiting system as $d\rightarrow 0^+$ is investigated in Section 3. At the end, Section 4 is devoted to the proof of Theorem \ref{thm-main}.

\section{Preliminary}

\subsection{Properties of single equations}

It is known that under assumptions \textbf{(A0)}, \textbf{(A1)}, \textbf{(A2)} and \textbf{(A4)}, the single equation (\ref{singled}) admits a unique positive solution in $\mathbb X$, denoted by $\theta_d$, if and only if
$$
\sup \left\{\textrm{Re}\, \lambda\, |\, \lambda\in \sigma(d\mathcal{K}+ f(x,0,0))   \right\}>0,
$$
while the single equation (\ref{singleD}) admits a unique positive solution in $\mathbb X$, denoted by $\eta_D$, if and only if
$$
\sup \left\{\textrm{Re}\, \lambda\, |\, \lambda\in \sigma(D\mathcal{P}+ g(x,0,0))   \right\}>0.
$$
See  \cite[Theorem 2.1]{BaiLi2017}  for details.

For further analysis, we need estimate the asymptotic behavior of $\theta_d$ as $d\rightarrow 0^+$.
Notice that due to \textbf{(A2)} and implicit function theorem,   there exists $u= F(x,v)\in C(\bar\Omega\times \mathbb R_+)$ with $ F_v(x,v)\in C(\bar\Omega\times \mathbb R_+)$
such that
$$
f(x,F(x,v), v)=0 \  \ \textrm{in} \ \bar\Omega\times \mathbb R_+,
$$
and
\begin{equation}\label{ift-F}
F_v(x,v) = - {f_v(x,u, v) \over f_u (x,u, v)} \  \ \textrm{in} \ \bar\Omega\times \mathbb R_+.
\end{equation}
Also  denote $F_+(x,v) =\max \{ 0, F(x,v) \}$ in $\bar\Omega\times \mathbb R_+$.

\begin{prop}\label{prop-d=0-theta}
There exists $C_1>0$ such that
$$
\left(F_+(x,0) -C_1 d \right)_+ < \theta_d < F_+(x,0) + C_1 \sqrt{d} \ \ \textrm{in} \ \bar\Omega
$$
for $0<d<1$.
\end{prop}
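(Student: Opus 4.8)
The plan is to bracket $\theta_d$ between an explicit subsolution and supersolution and then invoke the comparison principle for the single nonlocal equation (\ref{singled}). First I would record the uniform bound $0<\theta_d<M$: evaluating (\ref{singled}) at a maximum point $x_0$ of $\theta_d$ and using $\int_\Omega k(x_0,y)\,dy\le 1$ gives $\mathcal{K}[\theta_d](x_0)\le 0$, hence $f(x_0,\theta_d(x_0),0)\ge 0$, so $\theta_d(x_0)<M$ by \textbf{(A4)}. With this bound all comparison arguments take place in a fixed compact range of $u$ where, by \textbf{(A1)}--\textbf{(A2)} and continuity, $f_u(x,u,0)\le -c_0<0$ for some $c_0>0$; I also set $M'=\|F_+(\cdot,0)\|_\infty<\infty$.

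For the upper bound I would test $\bar w=F_+(x,0)+C_1\sqrt d$ as a supersolution. Since $\int_\Omega k(x,y)\,dy\le 1$ and $\bar w(x)\ge C_1\sqrt d$, one has $\mathcal{K}[\bar w](x)\le \|\bar w\|_\infty-\bar w(x)\le M'$, so the dispersal term contributes at most $M'd$. For the reaction term I split on the sign of $F(x,0)$: where $F(x,0)>0$, a first-order expansion around $F(x,0)$ (using $f(x,F(x,0),0)=0$ and $f_u\le -c_0$) gives $f(x,\bar w,0)\le -c_0 C_1\sqrt d$, and where $F(x,0)\le 0$ the monotonicity of $f$ in $u$ together with $f(x,0,0)\le 0$ gives $f(x,\bar w,0)\le -c_0 C_1\sqrt d$ as well; in both cases $\bar w\ge C_1\sqrt d$, so $\bar w f(x,\bar w,0)\le -c_0 C_1^2 d$. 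Hence $d\mathcal{K}[\bar w]+\bar w f(x,\bar w,0)\le (M'-c_0 C_1^2)d<0$ once $C_1>\sqrt{M'/c_0}$, and the comparison principle yields $\theta_d<\bar w$.

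The lower bound is where the improved rate $d$ (rather than $\sqrt d$) appears, and it comes from the sign of the dispersal gain term. I would take $\underline w=(F_+(x,0)-C_1 d)_+$, noting that this equals $(F(x,0)-C_1 d)_+$. Where $\underline w(x)=0$ the subsolution inequality reduces to $\mathcal{K}[\underline w](x)=\int_\Omega k(x,y)\underline w(y)\,dy\ge 0$, which holds because $k\ge 0$ and $\underline w\ge 0$. Where $\underline w(x)>0$, i.e. $F(x,0)>C_1 d$, I use the crucial one-sided bound $\mathcal{K}[\underline w](x)=\int_\Omega k(x,y)\underline w(y)\,dy-\underline w(x)\ge -\underline w(x)$ (again because the integral is nonnegative), while the expansion of $f$ around $F(x,0)$ gives $f(x,\underline w,0)\ge c_0 C_1 d$. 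Therefore $d\mathcal{K}[\underline w]+\underline w f(x,\underline w,0)\ge \underline w\, d\,(c_0 C_1-1)\ge 0$ provided $C_1>1/c_0$, and comparison gives $\theta_d>\underline w$. Taking $C_1$ to satisfy all the above requirements completes the proof.

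The main obstacle---and the reason the two bounds scale differently---is the nonlocal dispersal term near the free boundary $\{F(x,0)=0\}$. For the subsolution the integral $\int_\Omega k(x,y)\underline w(y)\,dy\ge 0$ is a pure gain, so only the loss $-\underline w(x)$, proportional to $\underline w(x)$, must be absorbed by the reaction term, and both terms scale with $\underline w$; this balance only forces $C_1>1/c_0$ and costs a single power of $d$. For the supersolution, by contrast, $\int_\Omega k(x,y)\bar w(y)\,dy$ can be of order $M'$ even at points where $\bar w(x)$ is as small as $C_1\sqrt d$, so the dispersal term cannot be made proportional to the (small) reaction term there; beating a fixed $O(d)$ excess with a reaction term of size $c_0\bar w^2$ forces $\bar w\gtrsim\sqrt d$. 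One should also check that the comparison principle applies in its strong form to turn the non-strict inequalities into the strict ones stated, and observe that for $d$ bounded away from $0$ the claimed bounds follow anyway from the uniform bound $\theta_d<M$, which confines the relevant $u$-range and legitimizes the fixed choice of $c_0$.
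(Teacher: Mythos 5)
Your proposal is correct and follows essentially the same route as the paper: the same supersolution $F_+(x,0)+C_1\sqrt{d}$ and subsolution $\left(F_+(x,0)-C_1 d\right)_+$, the same uniform bound $0<\theta_d<M$ giving $f_u(x,u,0)\le -c_0$ on a compact range, and the same balancing of the $O(d)$ dispersal term against the reaction term via the comparison/upper--lower solution method. Your extra remarks (the sign-splitting on $F(x,0)$, the explanation of the $d$ versus $\sqrt{d}$ rates, and the treatment of $d$ bounded away from $0$) are only expository refinements of the paper's computation.
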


\begin{proof}
First of all, by \textbf{(A4)} and strong maximum principle, it is easy to show that $0< \theta_d < M$.
Then due to  \textbf{(A2)}, there exists $c_0$ such that
$$
f_u(x,u,0) \leq -c_0\ \ \textrm{for}\  x\in \bar\Omega, u\in [0,M].
$$

On the one side, set $\hat u = F_+(x,0) + c_1 \sqrt{d}$, $c_1>0$. We compute as follows
\begin{eqnarray*}
&& d \mathcal{K}[\hat u] + \hat u f(x, \hat u, 0)\\
&=& d\int_{\Omega} k(x,y) ( F_+(y,0)  + c_1 \sqrt{d}  ) dy - d ( F_+(x,0) + c_1 \sqrt{d}  )\\
&& +  ( F_+(x,0) + c_1 \sqrt{d}  ) f(x, F_+(x,0) + c_1 \sqrt{d}, 0)\\
&\leq & d\int_{\Omega} k(x,y) ( F_+(y,0)  + c_1 \sqrt{d}  ) dy\\
&& +  ( F_+(x,0) + c_1 \sqrt{d}  )\left(  f(x, F_+(x,0) + c_1 \sqrt{d}, 0) - f(x, F_+(x,0) , 0) \right)\\
&\leq & d\int_{\Omega} k(x,y) ( F_+(y,0)  + c_1 \sqrt{d}  ) dy  - c_0c_1 \sqrt{d}( F_+(x,0) + c_1 \sqrt{d}  )\\
&<& 0,
\end{eqnarray*}
provided that $c_1$ is large enough.

On the other side, set $\underline{u} = \left(F_+(x,0) -c_2 d \right)_+$. Then compute as follows
\begin{eqnarray*}
&& d \mathcal{K}[\underline u] + \underline u f(x, \underline u, 0)\\
&\geq &   - d \left(F_+(x,0) -c_2 d \right)_+  +  \left(F_+(x,0) -c_2 d \right)_+ f(x, \left(F_+(x,0) -c_2 d \right)_+ , 0)\\
&=  &  - d \left(F_+(x,0) -c_2 d \right)_+\\
&& + \left(F_+(x,0) -c_2 d \right)_+ \left( f(x,  F_+(x,0) -c_2 d  , 0) - f(x,  F_+(x,0) , 0)\right)\\
&\geq &  \left(F_+(x,0) -c_2 d \right)_+ \left( -d + c_0 c_2 d \right)\\
&\geq & 0,
\end{eqnarray*}
provided that $c_2$ is large enough.

At the end, choose $C_1 = \max \{ c_1, c_2 \}$ and the desired conclusion follows.
\end{proof}

\subsection{Properties of nonlocal eigenvalue problems}

First of all,  the linearized operator of (\ref{original})  at $(\theta_d, 0)$ is
\begin{equation}\label{lin-d}
\mathcal{L}_{(\theta_d,0)} {\phi\choose\psi}={d\mathcal{K}[\phi]+[f(x, \theta_d, 0)+ \theta_d f_u(x, \theta_d, 0)]\phi + \theta_d f_v(x, \theta_d, 0)\psi \choose D\mathcal{P}[\psi]+ g (x, \theta_d, 0)\psi}.
\end{equation}
Also,  the linearized operator of (\ref{original})  at $(0, \eta_D)$ is
\begin{equation}\label{lin-D}
\mathcal{L}_{(0, \eta_D)} {\phi\choose\psi}={ d \mathcal{K}[\phi] + f(x,0,\eta_D)\phi  \choose D\mathcal{P}[\psi] +[g(x,0,\eta_D) + \eta_D g_v(x,0,\eta_D)]\psi + \eta_D g_u(x,0,\eta_D) \phi }.
\end{equation}
Denote
\begin{eqnarray}\label{PEV}
&&  \mu_{(\theta_d,0)}=\sup \left\{\textrm{Re}\, \lambda\, |\, \lambda\in \sigma(D\mathcal{P}+ g (x, \theta_d, 0))   \right\}, \\
&&  \nu_{(0, \eta_D)}= \sup \left\{\textrm{Re}\, \lambda\, |\, \lambda\in \sigma( d \mathcal{K}  +f(x,0,\eta_D))   \right\}.\nonumber
\end{eqnarray}
It is known that the signs of $\mu_{(\theta_d,0)}$ and $\nu_{(0, \eta_D)}$ determine the local stability/instability of
$(\theta_d,0)$ and  $(0, \eta_D)$ respectively.  This is explicitly stated as follows and the proof is omitted since it is standard.
\begin{lem}\label{lm-signs}
Assume that the assumptions \textbf{(A0)}, \textbf{(A1)} hold. Then
\begin{itemize}
\item[(i)] $(\theta_d,0)$ is locally unstable if $\mu_{(\theta_d,0)}>0$; $(\theta_d,0)$ is locally stable if $\mu_{(\theta_d,0)}<0$; $(\theta_d,0)$ is neutrally stable if $\mu_{(\theta_d,0)}=0$.
\item[(ii)] $(0, \eta_D)$ is locally unstable if $\nu_{(0, \eta_D)}>0$; $(0, \eta_D)$ is locally stable if $\nu_{(0, \eta_D)}<0$; $(0, \eta_D)$ is neutrally stable if $\nu_{(0, \eta_D)}=0$.
\end{itemize}
\end{lem}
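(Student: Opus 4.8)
The plan is to exploit the block-triangular structure of the two linearized operators. For $(\theta_d,0)$, the operator in (\ref{lin-d}) has the form $\binom{\phi}{\psi}\mapsto\binom{\mathcal A_1\phi+b(x)\psi}{\mathcal A_2\psi}$, where $\mathcal A_1=d\mathcal K+f(x,\theta_d,0)+\theta_d f_u(x,\theta_d,0)$, $\mathcal A_2=D\mathcal P+g(x,\theta_d,0)$, and $b(x)=\theta_d f_v(x,\theta_d,0)$ acts by multiplication. Since the $\psi$-equation decouples from $\phi$, the eigenvalue relation $\mathcal L_{(\theta_d,0)}\binom{\phi}{\psi}=\lambda\binom{\phi}{\psi}$ forces $\lambda\in\sigma(\mathcal A_2)$ whenever $\psi\neq0$ and $\lambda\in\sigma(\mathcal A_1)$ when $\psi=0$; assembling resolvents triangularly then gives $\sigma(\mathcal L_{(\theta_d,0)})=\sigma(\mathcal A_1)\cup\sigma(\mathcal A_2)$. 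Writing $s(\mathcal A)=\sup\{\,\textrm{Re}\,\lambda : \lambda\in\sigma(\mathcal A)\,\}$, the spectral bound of $\mathcal L_{(\theta_d,0)}$ equals $\max\{s(\mathcal A_1),s(\mathcal A_2)\}$, and by definition (\ref{PEV}) we have $s(\mathcal A_2)=\mu_{(\theta_d,0)}$. So everything reduces to showing that the first block is strictly stable.

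The key step is to prove $s(\mathcal A_1)<0$, so that the sign of the spectral bound of $\mathcal L_{(\theta_d,0)}$ is controlled entirely by $\mu_{(\theta_d,0)}$. For this I would test $\mathcal A_1$ against $\theta_d$ itself. Because $\theta_d$ solves (\ref{singled}), i.e. $d\mathcal K[\theta_d]=-\theta_d f(x,\theta_d,0)$, direct substitution gives $\mathcal A_1\theta_d=\theta_d^2 f_u(x,\theta_d,0)$, which is strictly negative on $\bar\Omega$ by \textbf{(A2)} together with $\theta_d>0$ (the latter from the strong maximum principle, as in the proof of Proposition \ref{prop-d=0-theta}). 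Thus $\theta_d$ is a positive strict supersolution for $\mathcal A_1$, and the Krein--Rutman and comparison theory for nonlocal operators of the form ``compact integral operator plus multiplication'' — the same machinery underlying the existence result \cite[Theorem 2.1]{BaiLi2017} — yields $s(\mathcal A_1)<0$. Consequently $s(\mathcal L_{(\theta_d,0)})=\max\{s(\mathcal A_1),\mu_{(\theta_d,0)}\}$ is negative, zero, or positive exactly according to whether $\mu_{(\theta_d,0)}$ is negative, zero, or positive.

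It remains to translate the sign of the spectral bound into (in)stability. Since $d\mathcal K$ and $D\mathcal P$ are bounded perturbations of (negative) multiplication operators, $\mathcal L_{(\theta_d,0)}$ generates a strongly continuous semigroup on $\mathbb X\times\mathbb X$ and the standard principle of linearized stability applies, giving local asymptotic stability when the spectral bound is negative, instability when it is positive, and neutral stability when it vanishes; this proves statement (i). Statement (ii) follows by the same argument with the components interchanged: the operator (\ref{lin-D}) is lower triangular with diagonal blocks $\mathcal B_1=d\mathcal K+f(x,0,\eta_D)$, whose spectral bound is $\nu_{(0,\eta_D)}$ by definition, and $\mathcal B_2=D\mathcal P+g(x,0,\eta_D)+\eta_D g_v(x,0,\eta_D)$, for which $\eta_D$ is a positive strict supersolution since $\mathcal B_2\eta_D=\eta_D^2 g_v(x,0,\eta_D)<0$, forcing $s(\mathcal B_2)<0$.

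The step I expect to need the most care is the spectral-decomposition claim $\sigma(\mathcal L)=\sigma(\mathcal A_1)\cup\sigma(\mathcal A_2)$ combined with the fact that, for nonlocal operators, the spectral bound need not be an isolated eigenvalue with a positive eigenfunction, unlike the elliptic case. One must verify that the dictionary ``spectral bound governs stability'' — including the delicate borderline $s=0$ neutral case — remains valid in this setting. This is precisely where the nonlocal spectral theory cited from \cite{BaiLi2017} does the real work, which is presumably why the authors regard the proof as standard and omit it.
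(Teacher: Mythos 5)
The paper gives no proof of Lemma \ref{lm-signs} at all (it is declared standard and omitted), and your argument is exactly the standard one the authors have in mind: exploit the triangular form of (\ref{lin-d}) and (\ref{lin-D}), show the diagonal block inherited from the scalar equation is strictly stable, then invoke linearized stability. Your computation $\mathcal{A}_1\theta_d=\theta_d^2 f_u(x,\theta_d,0)<0$ is correct, but Krein--Rutman is not the right tool to convert this strict supersolution into $s(\mathcal{A}_1)<0$; as you yourself observe, for nonlocal operators the spectral bound need not be an eigenvalue with positive eigenfunction. The paper's own Theorem \ref{thm-eigenv-general} closes this step immediately: since $\mathcal{A}_1=d\mathcal{K}+\gamma$ with $\gamma=f(x,\theta_d,0)+\theta_d f_u(x,\theta_d,0)\in C(\bar\Omega)$, taking $\phi=\theta_d$ in the inf--sup formula (\ref{eigenv-Wang}) gives $s(\mathcal{A}_1)\le\sup_{x}\theta_d f_u(x,\theta_d,0)<0$, the last inequality by compactness, continuity, $\min_{\bar\Omega}\theta_d>0$ and \textbf{(A2)}; the same remark applies to $\mathcal{B}_2$ and $\eta_D$ in part (ii). Note that this step genuinely uses \textbf{(A2)}, which is not among the hypotheses listed in the lemma (it is a standing assumption of the paper); without some such condition the $u$-block could itself carry unstable spectrum and the sign of $\mu_{(\theta_d,0)}$ alone would not decide stability, so your implicit reliance on it is not a defect but a necessity worth recording.

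The one claim that is not correct as written is the identity $\sigma(\mathcal{L}_{(\theta_d,0)})=\sigma(\mathcal{A}_1)\cup\sigma(\mathcal{A}_2)$. For block-triangular operators, assembling resolvents gives only the inclusion $\sigma(\mathcal{L}_{(\theta_d,0)})\subseteq\sigma(\mathcal{A}_1)\cup\sigma(\mathcal{A}_2)$; the reverse inclusion can fail (classical shift-operator examples), and your eigenvector dichotomy accounts only for point spectrum, whereas these operators carry essential spectrum coming from the multiplication part. The correct general statement, proved by the same elementary resolvent manipulations, is that the two sets differ at most inside $\sigma(\mathcal{A}_1)\cap\sigma(\mathcal{A}_2)$; in particular $\sigma(\mathcal{A}_2)\setminus\sigma(\mathcal{A}_1)\subseteq\sigma(\mathcal{L}_{(\theta_d,0)})$, because if $\lambda$ lies outside both $\sigma(\mathcal{L}_{(\theta_d,0)})$ and $\sigma(\mathcal{A}_1)$ one solves the triangular system to invert $\mathcal{A}_2-\lambda$. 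This weaker statement is all you need: once $s(\mathcal{A}_1)<0$, the possible discrepancy lies in the open left half-plane, so for $\mu_{(\theta_d,0)}\ge0$ one gets $s(\mathcal{L}_{(\theta_d,0)})=\mu_{(\theta_d,0)}$ (spectrum of $\mathcal{A}_2$ with real part near $\mu_{(\theta_d,0)}$ avoids $\sigma(\mathcal{A}_1)$ and hence lies in $\sigma(\mathcal{L}_{(\theta_d,0)})$), while for $\mu_{(\theta_d,0)}<0$ one gets $s(\mathcal{L}_{(\theta_d,0)})\le\max\{s(\mathcal{A}_1),\mu_{(\theta_d,0)}\}<0$. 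Finally, since $\mathcal{L}_{(\theta_d,0)}$ is a bounded operator on $\mathbb{X}\times\mathbb{X}$, the spectral mapping theorem applies to $e^{t\mathcal{L}_{(\theta_d,0)}}$, the growth bound equals the spectral bound, and the passage to nonlinear (in)stability under \textbf{(A1)} is as you describe. With these two repairs your proof is complete and is precisely what the paper means by ``standard''.
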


The characterization of $\mu_{(\theta_d,0)}$ and  $\nu_{(0, \eta_D)}$    plays  an important role in this paper. We present the related results from \cite{LiCovilleWang}  as follows for the convenience of readers.

\begin{thm}[\cite{LiCovilleWang}]\label{thm-eigenv-general}
Assume that $\gamma \in C(\bar\Omega)$ and let
$$
\lambda_1(d) = \sup \left\{\textrm{Re}\, \lambda\, |\, \lambda\in \sigma( d \mathcal{K}  + \gamma(x))   \right\}.
$$
Then we have
\begin{equation}\label{eigenv-Wang}
\lambda_1(d)= \sup_{\phi\in C(\bar\Omega),\, \phi>0 \ \textrm{on} \ \bar\Omega} \inf_{x\in\Omega} {d \mathcal{K}[\phi]  + \gamma \phi \over \phi }
=\inf_{\phi\in C(\bar\Omega),\, \phi>0 \ \textrm{on} \ \bar\Omega} \sup_{x\in\Omega} {d \mathcal{K}[\phi]  + \gamma \phi \over \phi }.
\end{equation}
In particular,
\begin{equation}\label{eigenv-d-0}
\lim_{d\rightarrow 0^+} \lambda_1(d)= \max_{x\in\bar\Omega} \gamma(x).
\end{equation}
\end{thm}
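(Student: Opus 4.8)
The plan is to read the two variational quantities in (\ref{eigenv-Wang}) as generalized principal eigenvalues and to sandwich them around the spectral bound $\lambda_1(d)$, exploiting that $\mathcal{A}:=d\mathcal{K}+\gamma$ generates a positive semigroup on $\mathbb X=C(\bar\Omega)$. Writing $\mathcal{A}\phi=d\int_\Omega k(x,y)\phi(y)\,dy+(\gamma(x)-d)\phi(x)$, one sees that $\mathcal{A}+sI$ leaves the cone $\mathbb X_+$ invariant once $s>d-\min_{\bar\Omega}\gamma$, so $e^{t\mathcal{A}}$ is cone-preserving. Let $\lambda_*$ and $\lambda^*$ denote the sup-inf and the inf-sup quantities. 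The condition $\inf_x(\mathcal{A}\phi)/\phi\ge\lambda$ is exactly that $\phi>0$ is a subsolution $\mathcal{A}\phi\ge\lambda\phi$, and $\sup_x(\mathcal{A}\psi)/\psi\le\lambda$ that $\psi>0$ is a supersolution $\mathcal{A}\psi\le\lambda\psi$, so $\lambda_*,\lambda^*$ are generalized principal eigenvalues through positive sub/supersolutions. The elementary ordering $\lambda_*\le\lambda^*$ I would get by a min-ratio comparison: given $\phi>0$ with $\mathcal{A}\phi\ge\alpha\phi$ and $\psi>0$ with $\mathcal{A}\psi\le\beta\psi$, let $\kappa=\min_{\bar\Omega}(\psi/\phi)$ be attained at $x_0$, so $\psi\ge\kappa\phi$ with equality at $x_0$; evaluating the supersolution inequality at $x_0$, bounding $\int k(x_0,\cdot)\psi\ge\kappa\int k(x_0,\cdot)\phi$, and inserting the subsolution inequality, the nonlocal terms cancel and leave $\beta\psi(x_0)\ge\alpha\psi(x_0)$, i.e.\ $\alpha\le\beta$.

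Next I would sandwich $\lambda_1(d)$ via the comparison principle for the positive semigroup. A positive subsolution $\mathcal{A}\phi\ge\alpha\phi$ makes $e^{\alpha t}\phi$ a subsolution of $w'=\mathcal{A}w$, so $e^{t\mathcal{A}}\phi\ge e^{\alpha t}\phi$ and the growth bound is $\ge\alpha$; dually a positive supersolution forces growth bound $\le\beta$. Since $\mathcal{A}$ is bounded, the semigroup is norm-continuous and its growth bound equals the spectral bound $\lambda_1(d)$, so $\lambda_*\le\lambda_1(d)\le\lambda^*$. To close the sandwich I would invoke Krein--Rutman: whenever $\lambda_1(d)$ lies strictly above the essential spectrum it is a simple eigenvalue with a strictly positive continuous eigenfunction $\varphi$, and feeding $\phi=\psi=\varphi$ into the two ratios pins $\lambda_*=\lambda^*=\lambda_1(d)$.

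The main obstacle is the regime where no positive continuous eigenfunction exists, i.e.\ when $\lambda_1(d)$ is absorbed into the essential spectrum. Because $d\mathcal{K}$ differs from multiplication by $\gamma-d$ only by the compact integral operator $\phi\mapsto d\int_\Omega k(x,y)\phi(y)\,dy$ (continuous kernel on a compact domain), Weyl's theorem gives $\sigma_{\mathrm{ess}}(\mathcal{A})=[\min_{\bar\Omega}\gamma-d,\ \max_{\bar\Omega}\gamma-d]$, so this occurs precisely when $\lambda_1(d)=\max_{\bar\Omega}\gamma-d$ and Krein--Rutman is unavailable. Here I would argue by approximation: perturb $\mathcal{A}$ (for instance enlarge the kernel, or replace $k$ by $k+\varepsilon$ to restore irreducibility) so as to push the principal eigenvalue above the essential spectrum, obtain operators $\mathcal{A}_\varepsilon$ to which the previous paragraph applies, and pass to the limit $\varepsilon\to0$ using monotonicity and continuity of $\lambda_*,\lambda^*,\lambda_1$ in the perturbation. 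This passage to the limit, and the uniform control it requires, is the delicate step and is the heart of the argument in \cite{LiCovilleWang}.

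Finally I would read off the limit (\ref{eigenv-d-0}) from (\ref{eigenv-Wang}). Testing the inf-sup formula with $\psi\equiv1$ and using $\int_\Omega k(x,y)\,dy\le\int_{\mathbb R^n}k(x,y)\,dy=1$ from \textbf{(A0)} gives $\lambda_1(d)\le\sup_x[d(\int_\Omega k(x,y)\,dy-1)+\gamma(x)]\le\max_{\bar\Omega}\gamma$, hence $\limsup_{d\to0^+}\lambda_1(d)\le\max_{\bar\Omega}\gamma$. For the reverse inequality the essential-spectrum computation above yields $\lambda_1(d)\ge\max_{\bar\Omega}\gamma-d$, so $\liminf_{d\to0^+}\lambda_1(d)\ge\max_{\bar\Omega}\gamma$; combining the two bounds proves (\ref{eigenv-d-0}). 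Note that the limit needs only the easy half $\lambda_1(d)\le\lambda^*$ of the characterization together with the essential-spectrum estimate, so it does not rely on the delicate approximation step.
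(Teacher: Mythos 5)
Your proposal is correct, but it is organized differently from the paper's, and the difference is worth noting. The paper does not prove the variational characterization (\ref{eigenv-Wang}) at all: it is quoted verbatim from \cite{LiCovilleWang}, and the paper's ``proof'' consists only of deducing the limit (\ref{eigenv-d-0}) from it. Your first three paragraphs reconstruct a plausible proof of (\ref{eigenv-Wang}) (sub/supersolution reading of the two ratios, the min-ratio ordering $\lambda_*\le\lambda^*$, the semigroup growth-bound sandwich, Krein--Rutman above the essential spectrum, and an approximation argument below it), but you defer the delicate approximation step to \cite{LiCovilleWang} --- so in the end both you and the paper lean on that reference for (\ref{eigenv-Wang}); your sketch of it checks out (in particular the cancellation in the min-ratio argument is exactly right), and that part of your write-up is extra rather than missing. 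Where you genuinely diverge is the lower bound in (\ref{eigenv-d-0}). The paper gets $\lambda_1(d)\ge \max_{\bar\Omega}\gamma-d$ directly from the inf-sup formula: since $d(\int_\Omega k(x,y)\phi(y)dy)/\phi(x)\ge 0$, every positive test function $\phi$ satisfies $\sup_x\bigl(d\mathcal{K}[\phi]+\gamma\phi\bigr)/\phi\ge \max_{\bar\Omega}\gamma-d$, and then the equality $\lambda_1(d)=\inf_\phi\sup_x$ (in particular its nontrivial direction $\lambda_1\ge\inf\sup$) converts this into the lower bound. You instead obtain the same inequality from Weyl's theorem: the integral part is compact on $C(\bar\Omega)$, so $\sigma_{\mathrm{ess}}(d\mathcal{K}+\gamma)$ equals the range of $\gamma-d$ and hence $\max_{\bar\Omega}\gamma-d\in\sigma(d\mathcal{K}+\gamma)$. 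The upper bounds coincide (test the inf-sup ratio with $\phi\equiv 1$ and use $\int_\Omega k(x,y)dy\le 1$ from \textbf{(A0)}). What your route buys is the observation you make explicitly: the limit (\ref{eigenv-d-0}) then needs only the easy inequality $\lambda_1(d)\le\lambda^*$ plus standard spectral theory, and is independent of the hard half of the cited characterization, whereas the paper's two-line deduction uses the full strength of (\ref{eigenv-Wang}); the price is invoking compactness and Fredholm theory where the paper needs only positivity of the kernel term.
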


\begin{proof}
Indeed, (\ref{eigenv-Wang}) is proved in \cite{LiCovilleWang} and we only explain how to derive (\ref{eigenv-d-0}).  Note that
$$
\lambda_1(d)
=\inf_{\phi\in C(\bar\Omega),\, \phi>0 \ \textrm{on} \ \bar\Omega} \sup_{x\in\Omega} {d \mathcal{K}[\phi]  + \gamma(x) \phi(x) \over \phi (x)}.
$$
Then it is easy to see that
$$
\lim_{d\rightarrow 0^+} \lambda_1(d)\leq  \lim_{d\rightarrow 0^+} \sup_{x\in\Omega}  \left( d \mathcal{K}[1]  + \gamma(x)  \right) =\max_{x\in\bar\Omega} \gamma(x).
$$
Moreover,
$$
\lim_{d\rightarrow 0^+} \lambda_1(d)\geq  \lim_{d\rightarrow 0^+}   \sup_{x\in\Omega}   \left( - d  + \gamma(x) \right)  =\max_{x\in\bar\Omega} \gamma(x).
$$
It is proved.
\end{proof}

Now we prepare a simple property, which will be repeatedly used in future.

\begin{lem}\label{lm-d=0}
Denote $\mu_0 = \lim_{d\rightarrow 0^+} \mu_{(\theta_d,0)}, \ \nu_D^0  = \lim_{d\rightarrow 0^+} \nu_{(0, \eta_D)}.$  Then $\mu_0 $ and $\nu_D^0$ are well defined. Moreover,
\begin{equation}\label{mu0nu0}
\mu_0= \sup \left\{\textrm{Re}\, \lambda\, |\, \lambda\in \sigma(D\mathcal{P}+ g(x, F_+(x,0), 0))   \right\},\ \nu_D^0 = \max_{x\in\bar\Omega}\  f(x,0,\eta_D).
\end{equation}
\end{lem}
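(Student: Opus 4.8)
The plan is to treat the two quantities $\mu_0$ and $\nu_D^0$ separately, since their dependence on $d$ is of a completely different nature. The formula for $\nu_D^0$ is essentially immediate from Theorem \ref{thm-eigenv-general}, whereas the formula for $\mu_0$ rests on a continuity-in-the-potential argument combined with the asymptotics of $\theta_d$ furnished by Proposition \ref{prop-d=0-theta}.

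For $\nu_D^0$ I would observe that $\eta_D$ depends only on $D$, not on $d$, so that in $\nu_{(0,\eta_D)} = \sup\{\textrm{Re}\,\lambda \mid \lambda\in\sigma(d\mathcal{K} + f(x,0,\eta_D))\}$ the only $d$-dependence sits in the coefficient $d$ multiplying $\mathcal{K}$. Hence one applies Theorem \ref{thm-eigenv-general} directly with $\gamma(x)=f(x,0,\eta_D)\in C(\bar\Omega)$ (continuous by \textbf{(A1)}); equation (\ref{eigenv-d-0}) then yields at once that the limit exists and that $\nu_D^0=\lim_{d\to 0^+}\nu_{(0,\eta_D)}=\max_{x\in\bar\Omega} f(x,0,\eta_D)$.

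For $\mu_0$ the subtlety is that the diffusion coefficient $D$ is held fixed, so (\ref{eigenv-d-0}) does not apply; the entire $d$-dependence of $\mu_{(\theta_d,0)} = \sup\{\textrm{Re}\,\lambda \mid \lambda\in\sigma(D\mathcal{P} + g(x,\theta_d,0))\}$ enters through the potential $g(x,\theta_d,0)$. First I would invoke Proposition \ref{prop-d=0-theta} to conclude that $\theta_d\to F_+(\cdot,0)$ uniformly on $\bar\Omega$ as $d\to 0^+$, the two-sided bounds squeezing $\theta_d$ to $F_+(\cdot,0)$. By the uniform continuity of $g$ on compact sets, guaranteed by \textbf{(A1)}, this gives $\|g(\cdot,\theta_d,0)-g(\cdot,F_+(\cdot,0),0)\|_\infty\to 0$. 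The key step is then a Lipschitz estimate for the principal eigenvalue as a function of its potential: using the variational characterization (\ref{eigenv-Wang}) of Theorem \ref{thm-eigenv-general}, which holds verbatim with $\mathcal{K},d$ replaced by $\mathcal{P},D$ since $\mathcal{P}$ is the same type of operator, one reads off from $\lambda_1(\gamma)=\sup_{\phi>0}\inf_x (D\mathcal{P}[\phi]+\gamma\phi)/\phi$ that $|\lambda_1(\gamma_1)-\lambda_1(\gamma_2)|\leq\|\gamma_1-\gamma_2\|_\infty$, because shifting the potential by a constant $\pm\varepsilon$ shifts the Rayleigh quotient, and hence the sup-inf, by exactly $\pm\varepsilon$. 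Combining these, $|\mu_{(\theta_d,0)}-\mu_0|\leq\|g(\cdot,\theta_d,0)-g(\cdot,F_+(\cdot,0),0)\|_\infty\to 0$, which simultaneously shows the limit exists and equals $\sup\{\textrm{Re}\,\lambda \mid \lambda\in\sigma(D\mathcal{P}+g(x,F_+(x,0),0))\}$.

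The main obstacle is essentially just the Lipschitz estimate for $\mu_0$, and the point requiring care is that, unlike for $\nu_D^0$, the diffusion operator does not degenerate in the limit: the convergence is driven entirely by the convergence of the potential $g(\cdot,\theta_d,0)$. Everything else is routine bookkeeping, provided one correctly invokes the variational formula for the operator $D\mathcal{P}$ rather than for $d\mathcal{K}$.
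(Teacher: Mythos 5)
Your proposal is correct and follows exactly the route the paper intends: the paper's proof of this lemma is omitted, stating only that it ``follows directly from Proposition \ref{prop-d=0-theta} and Theorem \ref{thm-eigenv-general}'', and your argument is precisely a fleshed-out version of that, using (\ref{eigenv-d-0}) for $\nu_D^0$ and Proposition \ref{prop-d=0-theta} together with the variational formula (\ref{eigenv-Wang}) (applied to $D\mathcal{P}$) for $\mu_0$. Your Lipschitz-in-the-potential estimate $|\lambda_1(\gamma_1)-\lambda_1(\gamma_2)|\leq\|\gamma_1-\gamma_2\|_\infty$ is the right detail to supply and is the one the paper leaves implicit.
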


This lemma follows directly from Proposition \ref{prop-d=0-theta} and  Theorem  \ref{thm-eigenv-general} and we omit the details of its proof.

\subsection{Properties of monotone systems}
The following result explains how to characterize the global dynamics of the competition model (\ref{original}) with two semi-trivial steady states.

\begin{thm}\label{thm-monotone}
Assume that the assumptions \textbf{(A0)}-\textbf{(A4)} hold and the system (\ref{original}) admits two semi-trivial steady states, denoted by $(\theta_d, 0)$ and $(0,\eta_D)$. We have the following three possibilities:
\begin{itemize}
\item[(i)]   If both $\mu_{(\theta_d,0)}>0$ and $\nu_{(0, \eta_D)}>0$,     the system (\ref{original}) at least has one positive steady state in $L^{\infty}(\Omega)\times L^{\infty}(\Omega)$.    If in addition, assume that the system (\ref{original}) has a unique positive steady state in $\mathbb X\times \mathbb X$, then it is globally asymptotically stable relative to
     $\mathbb X_{++} \times \mathbb X_{++}$.
\item[(ii)]   If $\mu_{(\theta_d,0)}>0$   and no positive steady states of the system (\ref{original}) exist, then the semi-trivial steady state $(0,\eta_D)$ is globally asymptotically stable relative to
     $\mathbb X_{++} \times \mathbb X_{++}$.
\item[(iii)]   If $\nu_{(0, \eta_D)}>0$  and the system (\ref{original}) admits  no positive steady states, then the semi-trivial steady state $(\theta_d,0)$ is globally asymptotically stable relative to
     $\mathbb X_{++} \times \mathbb X_{++}$.
\end{itemize}
\end{thm}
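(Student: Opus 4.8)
The plan is to recognize (\ref{original}) as a two-species competitive system and to invoke the theory of monotone (competitive) dynamical systems, in the spirit of Hsu--Smith--Waltman and of Smith's monograph, which reduces the global dynamics to the stability of the two semi-trivial equilibria. Assumption \textbf{(A3)} ($f_v<0$, $g_u<0$) is precisely what makes the system competitive, so the natural strategy is to set up the competitive order, verify monotonicity of the induced semiflow, and then apply the associated trichotomy.

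First I would establish the order structure and the basic dynamical properties. By \textbf{(A4)} the solution map $T(t)$ is a globally defined, point-dissipative semiflow on $\mathbb{X}_+\times\mathbb{X}_+$, with every positive orbit eventually entering a fixed box. Introducing the competitive order $(u_1,v_1)\preceq(u_2,v_2)\iff u_1\le u_2,\ v_1\ge v_2$, a comparison principle for the nonlocal system (using \textbf{(A2)} and \textbf{(A3)}) shows that $T(t)$ is order-preserving for $\preceq$; the positivity $k(x,x)>0$, $p(x,x)>0$ in \textbf{(A0)} together with a nonlocal strong maximum principle upgrades this to strong monotonicity on the interior. In this order the two semi-trivial equilibria are comparable, $(0,\eta_D)\preceq(\theta_d,0)$, and by comparison with the scalar problems (\ref{singled})--(\ref{singleD}) the order interval $\mathcal{I}=[(0,\eta_D),(\theta_d,0)]_\preceq=\{0\le u\le\theta_d,\ 0\le v\le\eta_D\}$ is positively invariant and attracts every orbit starting in $\mathbb{X}_{++}\times\mathbb{X}_{++}$.

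Next I would read off the stability of the boundary equilibria from Lemma \ref{lm-signs}: $\mu_{(\theta_d,0)}>0$ says $(\theta_d,0)$ is unstable (species $v$ can invade), while $\nu_{(0,\eta_D)}>0$ says $(0,\eta_D)$ is unstable (species $u$ can invade). The competitive trichotomy then distinguishes the three cases. In case (i) both semi-trivial states are unstable, so the flow is uniformly persistent and a fixed-point/attractor argument inside $\mathcal{I}$ produces at least one positive equilibrium in $L^{\infty}(\Omega)\times L^{\infty}(\Omega)$; under the uniqueness hypothesis in $\mathbb{X}\times\mathbb{X}$, the two extremal monotone orbits --- issued from small perturbations of $(\theta_d,0)$ (decreasing in $\preceq$) and of $(0,\eta_D)$ (increasing in $\preceq$) --- are squeezed onto the unique positive equilibrium, and the sandwich forces global convergence of every interior orbit to it. In cases (ii) and (iii), instability of one semi-trivial state together with the absence of a positive equilibrium collapses both extremal orbits onto the remaining (stable) semi-trivial state, which is therefore globally attracting relative to $\mathbb{X}_{++}\times\mathbb{X}_{++}$.

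The main obstacle is that $\mathcal{K}$ and $\mathcal{P}$ are not smoothing, so $T(t)$ is not compact and the classical monotone dynamical systems theory --- which relies on asymptotic compactness to guarantee nonempty, compact $\omega$-limit sets --- does not apply verbatim. I would circumvent this by replacing compactness arguments with monotone convergence: the two extremal orbits are monotone and order-bounded in $C(\bar\Omega)$, hence converge pointwise by the monotone convergence theorem to equilibria, and because these limits are continuous the convergence is in fact uniform by Dini's theorem. The delicate point is then to pin down the limits --- using the nonlocal strong maximum principle and strong monotonicity to exclude an unstable semi-trivial state as a limit and to identify the common limit as either the unique positive equilibrium (case i) or the stable semi-trivial equilibrium (cases ii and iii).
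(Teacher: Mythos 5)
Your outline follows the same route as the paper: the paper's own proof is a one-line deferral to \cite[Theorem 2.1]{BaiLi2015}, and that argument is exactly the competitive-order/monotone-semiflow scheme you describe, with compactness replaced by monotone convergence along extremal orbits. However, two nonlocal-specific steps in your plan are genuinely incomplete as written.

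First, the construction of the extremal monotone orbits. An orbit ``issued from a small perturbation of $(\theta_d,0)$'' is monotone in the competitive order only if the initial datum is a strict sub/super-equilibrium pair, and the classical way to produce one is to perturb by a positive principal eigenfunction of the linearization. For nonsymmetric nonlocal operators this is precisely what may fail: $\mu_{(\theta_d,0)}=\sup\left\{\textrm{Re}\,\lambda\,|\,\lambda\in\sigma(D\mathcal{P}+g(\cdot,\theta_d,0))\right\}$ need not be an eigenvalue, and no positive eigenfunction need exist. The repair is the sup-inf characterization of Theorem \ref{thm-eigenv-general} (from \cite{LiCovilleWang}): if $\mu_{(\theta_d,0)}>0$, it supplies a continuous $\phi>0$ on $\bar\Omega$ with $D\mathcal{P}[\phi]+g(x,\theta_d,0)\phi\geq \frac{1}{2}\mu_{(\theta_d,0)}\phi$, and $(\theta_d,\delta\phi)$ is then a genuine sub-equilibrium for $\delta$ small; the same device is what rules out monotone convergence of these orbits to an unstable semi-trivial state in case (i). Your proposal invokes instability only through Lemma \ref{lm-signs} and has no substitute for this ingredient, so the extremal orbits never get constructed.

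Second, your compactness workaround asserts that the pointwise limits ``are continuous,'' so that Dini's theorem applies. Continuity is not automatic: nonlocal operators do not regularize, which is exactly why part (i) of the statement claims existence of a coexistence state only in $L^{\infty}(\Omega)\times L^{\infty}(\Omega)$, and why the paper needs Lemma \ref{lm-smoothness} --- under the additional hypothesis \textbf{(A5)}, which is not assumed in this theorem --- to pass back to $\mathbb X\times \mathbb X$. The correct order of operations is: show the pointwise limit solves the steady-state system (dominated convergence), then identify it with a steady state already known to be continuous --- $(0,\eta_D)$ or $(\theta_d,0)$ in cases (ii)--(iii), via the no-coexistence hypothesis combined with a nonlocal strong maximum principle, and the unique continuous coexistence state in case (i), via the uniqueness hypothesis --- and only then invoke Dini. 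As written, your Dini step assumes exactly what still has to be proved.
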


\begin{proof}
The arguments are almost the same as that of \cite[Theorem 2.1]{BaiLi2015}, where a simplified nonlocal operator is considered.
\end{proof}

The following simple property indicates that any positive steady state of the system (\ref{original}) in $L^{\infty}(\Omega)\times L^{\infty}(\Omega)$ belongs to $\mathbb X\times \mathbb X$ under the assumption \textbf{(A5)}.

\begin{lem}\label{lm-smoothness}
Assume that \textbf{(A0)} holds and $f_v g_u \leq f_u g_v$ in $\bar\Omega \times \mathbb R_+ \times \mathbb R_+$. Then any positive steady state of the system (\ref{original}) in $L^{\infty}(\Omega)\times L^{\infty}(\Omega)$ belongs to $\mathbb X\times \mathbb X$.
\end{lem}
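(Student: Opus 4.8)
The plan is to treat the operators in \textbf{(D)} (the case \textbf{(N)} being analogous) and to bootstrap the continuity of a positive steady state $(U,V)\in L^\infty(\Omega)\times L^\infty(\Omega)$ directly from the algebraic structure of the stationary equations. Isolating the zeroth-order terms, for a.e.\ $x\in\Omega$ one has
$$U(x)\big(d-f(x,U(x),V(x))\big)=d\int_\Omega k(x,y)U(y)\,dy=:w_1(x),\qquad V(x)\big(D-g(x,U(x),V(x))\big)=D\int_\Omega p(x,y)V(y)\,dy=:w_2(x).$$
Since $k,p$ are continuous and $U,V\in L^\infty$, a uniform-continuity estimate shows $w_1,w_2\in C(\bar\Omega)$, and $w_1,w_2>0$ because $U,V>0$ a.e.\ and $k(x,x),p(x,x)>0$ by \textbf{(A0)}. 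As $U,V>0$, the factors $d-f$ and $D-g$ are positive at the solution, which is what pins it down in the right branch.

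First I would solve each equation separately. For fixed $x$ and $v\ge0$, the map $s\mapsto s\big(d-f(x,s,v)\big)$ has derivative $d-f-sf_u$, which is strictly positive on $\{s:f(x,s,v)<d\}$ by \textbf{(A2)}, and \textbf{(A4)} forces the map to increase from $0$ to $+\infty$ there; hence $s\big(d-f(x,s,v)\big)=w_1(x)$ has a unique root $s=\mathcal S(x,v)$ with $d-f>0$. The implicit function theorem gives $\mathcal S\in C(\bar\Omega\times\mathbb R_+)$ with $\mathcal S_v=\dfrac{\mathcal S f_v}{d-f-\mathcal S f_u}<0$ by \textbf{(A3)}. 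Symmetrically, the second equation defines $v=\mathcal R(x,u)\in C(\bar\Omega\times\mathbb R_+)$ with $\mathcal R_u<0$. Consequently $U(x)=\mathcal S(x,V(x))$ and $V(x)=\mathcal R(x,U(x))$ a.e., so $U(x)$ is a zero of $\Phi(x,s):=s-\mathcal S\big(x,\mathcal R(x,s)\big)$.

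Next I would show $\Phi(x,\cdot)$ has a unique, continuous zero. Since $\mathcal S$ is uniformly bounded (again by \textbf{(A4)} and $w_1\in L^\infty$), $\Phi(x,0)=-\mathcal S\big(x,\mathcal R(x,0)\big)<0$ while $\Phi(x,s)\to+\infty$, so a zero exists for every $x\in\bar\Omega$. For uniqueness I would verify $\Phi_s>0$ at every zero $s_*$: there, with $t_*=\mathcal R(x,s_*)$, the identity $\mathcal S(x,t_*)=s_*$ makes the evaluation points of $\mathcal S_v$ and $\mathcal R_u$ coincide at $(x,s_*,t_*)$, whence
$$\Phi_s(x,s_*)=1-\mathcal S_v\mathcal R_u=1-\dfrac{s_*t_*\,f_v g_u}{(d-f-s_*f_u)(D-g-t_*g_v)}.$$
Expanding the denominator as $(d-f)(D-g)-(d-f)t_*g_v-(D-g)s_*f_u+s_*t_*f_ug_v$ and subtracting $s_*t_*f_vg_u$ leaves $(d-f)(D-g)-(d-f)t_*g_v-(D-g)s_*f_u+s_*t_*(f_ug_v-f_vg_u)$, which is strictly positive since $(d-f)(D-g)>0$, the middle terms are $\ge0$ by \textbf{(A2)}--\textbf{(A3)}, and $f_vg_u\le f_ug_v$ by hypothesis; thus the quotient lies in $[0,1)$ and $\Phi_s(x,s_*)>0$. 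Every zero being an up-crossing, $\Phi(x,\cdot)$ has exactly one zero $s(x)$; joint continuity of $\Phi$ together with strict monotonicity through the root yields $s(\cdot)\in C(\bar\Omega)$. Then $U=s(\cdot)$ and $V=\mathcal R\big(\cdot,s(\cdot)\big)$ a.e., both continuous, so $(U,V)\in\mathbb X\times\mathbb X$.

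The hard part will be the uniqueness in the third step. The composite slope $\mathcal S_v\mathcal R_u$ couples the two equations, and the \emph{pointwise} weak-competition inequality $f_vg_u\le f_ug_v$ only helps when $f$ and $g$ are evaluated at a common argument $(x,u,v)$; in general the chain rule evaluates $f$-derivatives and $g$-derivatives at different points, where no such control is available. The observation that unlocks the proof is that this coincidence of arguments is automatic \emph{precisely at a zero} of $\Phi$ — which is exactly where monotonicity must be checked — so the pointwise hypothesis applies there, and nowhere else is it needed.
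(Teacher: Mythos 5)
Your proof is correct and is precisely the argument the paper has in mind: the paper omits the proof with the remark that the lemma ``can be verified easily by applying implicit function theorem,'' and your pointwise inversion of $u\bigl(d-f(x,u,v)\bigr)=w_1(x)$, $v\bigl(D-g(x,u,v)\bigr)=w_2(x)$ --- where $f_vg_u\le f_ug_v$ together with the signs from \textbf{(A2)}--\textbf{(A3)} makes the Jacobian-type quantity $(d-f-uf_u)(D-g-vg_v)-uvf_vg_u$ strictly positive --- is exactly that verification, with your up-crossing argument supplying the global uniqueness of the root that a bare IFT citation glosses over and that is needed to identify the a.e.-defined steady state with one continuous function. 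One remark: your argument invokes \textbf{(A1)}--\textbf{(A4)} beyond the lemma's literal hypotheses, but this is consistent with the paper's standing-assumption convention and is unavoidable for any proof along these lines.
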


This lemma can be verified easily by applying implicit function theorem and we omit the details.

\section{Limiting system as $d\rightarrow 0^+$}
In this section, we will study  the following  system
\begin{equation}\label{system d=0}
  \begin{cases}
    u= F_+(x,v),\\
    D\mathcal{P} [v]+v g(x,u,v)=0.
  \end{cases}
\end{equation}
and then demonstrate that it is the limiting system of the stationary problem of (\ref{original}) as $d \rightarrow 0^+$
\begin{equation}\label{stationary}
\begin{cases}
d \mathcal{K}[u]  +u f(x,u,v) =0  &\textrm{in } \Omega,\\
D \mathcal{P}[v]  +v g(x,u,v) =0  &\textrm{in } \Omega.
\end{cases}
\end{equation}

\subsection{Existence and uniqueness of limiting system}
The main purpose in this subsection is to establish the  following result concerning  the existence  and uniqueness of solution $(u,v)$ with $v$ being  positive to the  system (\ref{system d=0}).

\begin{thm}\label{thm-system-d=0}
Assume that \textbf{(A0)}-\textbf{(A5)} hold. The system (\ref{system d=0}) admits a unique solution $(u,v)$ with   $v$  being positive if and only if $\mu_0>0$. Moreover, $u\equiv 0$ if and only if $\nu_D^0 \le 0$. Here $\mu_0$ and $\nu_D^0$ are defined in (\ref{mu0nu0}).
\end{thm}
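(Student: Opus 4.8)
The plan is to eliminate $u$ from the system by substituting the algebraic constraint $u = F_+(x,v)$ into the second equation, thereby reducing (\ref{system d=0}) to a \emph{single} nonlocal equation for $v$. Setting
$$
G(x,v) := g(x, F_+(x,v), v),
$$
the system (\ref{system d=0}) becomes equivalent to requiring $u = F_+(x,v)$ together with
\begin{equation}\label{plan-single}
D\mathcal{P}[v] + v\, G(x,v) = 0.
\end{equation}
Thus the existence and uniqueness of a solution with $v$ positive reduces exactly to the existence and uniqueness of a positive solution of the scalar problem (\ref{plan-single}), after which $u$ is recovered uniquely from the constraint.

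Next I would verify that $G$ satisfies the structural hypotheses needed to invoke the single-equation theory recalled at the beginning of Section 2 (namely \cite[Theorem 2.1]{BaiLi2017}). Continuity of $G$ and the bound $G(x,v)<0$ for $v\geq M$ follow directly from \textbf{(A1)} and \textbf{(A4)}. The crucial point is the monotonicity $G_v<0$, which is precisely where \textbf{(A5)} enters. On the region where $F>0$ one has $\partial_v F_+ = F_v = -f_v/f_u$, so
$$
G_v = g_u F_v + g_v = \frac{f_u g_v - f_v g_u}{f_u} < 0,
$$
because \textbf{(A5)} gives $f_u g_v - f_v g_u > 0$ while $f_u<0$; on the region where $F\leq 0$ one has $\partial_v F_+ = 0$ and $G_v = g_v<0$ by \textbf{(A2)}. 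Hence $v\mapsto G(x,v)$ is strictly decreasing. The principal eigenvalue governing existence is that of $D\mathcal{P}+G(x,0)$, and since $G(x,0)=g(x,F_+(x,0),0)$ this threshold is exactly $\mu_0$ by the definition (\ref{mu0nu0}). Therefore (\ref{plan-single}) admits a unique positive solution if and only if $\mu_0>0$, which yields the first assertion.

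For the ``Moreover'' part I would characterize when the recovered component $u$ vanishes identically. Because $f$ is strictly decreasing in its second argument, $F_+(x,v)=0$ is equivalent to $f(x,0,v)\leq 0$. If $u\equiv 0$, then the second equation reduces to $D\mathcal{P}[v]+v\,g(x,0,v)=0$, whose unique positive solution is $\eta_D$; hence $v=\eta_D$, and $F_+(x,\eta_D)\equiv 0$ forces $f(x,0,\eta_D)\leq 0$ on $\bar\Omega$, that is $\nu_D^0=\max_{\bar\Omega}f(x,0,\eta_D)\leq 0$. Conversely, if $\nu_D^0\leq 0$ then $f(x,0,\eta_D)\leq 0$ everywhere, so $F_+(x,\eta_D)\equiv 0$ and the pair $(0,\eta_D)$ solves (\ref{system d=0}) with $v=\eta_D>0$; by the uniqueness already established it is \emph{the} solution, whence $u\equiv 0$.

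The main obstacle I anticipate is the regularity of $G$ across the free boundary $\{F=0\}$: since $F_+$ has a corner there, $G_v$ generally has a jump discontinuity, so $G$ is only Lipschitz and the single-equation results cannot be quoted verbatim if they are stated for $C^1$ reaction terms. I would address this by observing that the conclusions actually needed—existence via positivity of the principal eigenvalue of the continuous operator $D\mathcal{P}+G(x,0)$, and uniqueness via the strict monotonicity of $v\mapsto G(x,v)$—require only continuity and strict decrease of $G$, both of which hold on each side of $\{F=0\}$; alternatively one may approximate $F_+$ by smooth monotone functions and pass to the limit.
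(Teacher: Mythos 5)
Your skeleton coincides with the paper's: reduce (\ref{system d=0}) to the scalar problem (\ref{system d=0-single}), use \textbf{(A5)} to show that $v\mapsto g(x,F_+(x,v),v)$ is strictly decreasing (your derivative computation is exactly the one in the paper's Lemma \ref{lm-g-decreasing}), identify the threshold with $\mu_0$, and recover $u=F_+(x,v)$ from the constraint. Your treatment of the ``Moreover'' part is correct and is in fact more explicit than the paper's one-line justification.

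The gap is in the step that carries all the weight: existence and uniqueness for (\ref{system d=0-single}) are disposed of by invoking \cite[Theorem 2.1]{BaiLi2017}, and, as you yourself note, its hypotheses (a reaction term that is $C^1$ in $v$ with negative derivative, the analogues of \textbf{(A1)}--\textbf{(A2)}) fail for $G(x,v)=g(x,F_+(x,v),v)$ because of the corner of $F_+$ on $\{F=0\}$. Neither of your proposed repairs closes this. The smoothing repair can at best produce existence: uniqueness does not pass to the limit of approximating problems, so you would still need a comparison principle for the nonsmooth limiting equation; moreover, passing to the limit is itself delicate since nonlocal equations have no regularizing effect and hence no compactness. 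The repair ``the proofs only need continuity and strict decrease'' is not a citation but a claim whose verification \emph{is} the theorem; it is precisely where the paper spends its effort. Concretely, the paper (i) proves a comparison principle for the nonsmooth equation from scratch (Lemma \ref{cp-d=0}: a nonlocal strong-positivity argument showing a nonnegative nontrivial supersolution is strictly positive on $\bar\Omega$, followed by a sliding argument on $\ell_*=\inf\{\ell \mid \ell v^*>v\}$); (ii) obtains existence without ever applying the scalar theorem to $G$, by freezing the first argument: $V_1$ solves $D\mathcal{P}[v]+vg(x,F_+(x,0),v)=0$ and inductively $V_k$ solves $D\mathcal{P}[v]+vg(x,F_+(x,V_{k-1}),v)=0$; each step is a legitimate application of \cite[Theorem 2.1]{BaiLi2017} because the frozen reaction is $C^1$ in $v$, the sequence increases and is bounded by $\eta_D$, and the monotone limit solves (\ref{system d=0-single}) (continuity of the limit requiring a further argument based on \textbf{(A5)}); (iii) proves the necessity direction (a positive solution exists $\Rightarrow\mu_0>0$) directly from the min--max formula of Theorem \ref{thm-eigenv-general} together with the strict decrease --- a direction your outline leaves entirely inside the black box. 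If you replace the citation by this frozen-argument iteration for existence and prove the comparison lemma for uniqueness, your outline becomes a complete proof.
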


Notice that the system (\ref{system d=0}) can be rewritten as
\begin{equation}\label{system d=0-single}
    D\mathcal{P} [v]+v g(x,F_+(x,v),v )=0.
\end{equation}
First of all, we need establish a property of $g(x, F_+(x,v),v )$, which will be used repeatedly  throughout this paper.

\begin{lem}\label{lm-g-decreasing}
Assume that \textbf{(A2)}, \textbf{(A3)} and \textbf{(A5)} hold. Then   $  g(x,F_+(x,v),v )$ is strictly decreasing in $v\geq 0$. Moreover, for any fixed $M_1>0$, there exists $\sigma_1 = \sigma_1(M_1)>0$ such that for $0 \leq v< v_1\leq M_1$
$$
g(x,F_+(x,v_1),v_1 ) - g(x,F_+(x,v),v ) \leq - \sigma_1 (v_1-v).
$$
\end{lem}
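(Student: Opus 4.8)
The plan is to compute the derivative of $g(x,F_+(x,v),v)$ in $v$ wherever it exists and show it is bounded above by a strictly negative constant on any bounded $v$-interval. The function $v \mapsto F_+(x,v) = \max\{0, F(x,v)\}$ is continuous and piecewise $C^1$; it has a possible corner only where $F(x,v)=0$. So I would split the analysis into the region where $F(x,v)>0$ (so $F_+ = F$ and $u = F(x,v)>0$) and the region where $F(x,v)\le 0$ (so $F_+ \equiv 0$ and $u=0$).

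First, on the set where $F(x,v)\le 0$, we have $F_+(x,v)\equiv 0$, hence $g(x,F_+(x,v),v) = g(x,0,v)$, whose $v$-derivative is $g_v(x,0,v)$. By \textbf{(A2)} this is strictly negative, and by \textbf{(A1)} together with compactness it is bounded above by some $-\sigma'<0$ on $\bar\Omega\times[0,M_1]$. Second, on the set where $F(x,v)>0$, I differentiate using the chain rule:
\begin{equation*}
\frac{\partial}{\partial v} g(x,F(x,v),v) = g_u(x,u,v)\, F_v(x,v) + g_v(x,u,v),
\end{equation*}
with $u=F(x,v)$. Now I substitute the implicit-function identity (\ref{ift-F}), namely $F_v = -f_v/f_u$, to get
\begin{equation*}
\frac{\partial}{\partial v} g(x,F(x,v),v) = -\,\frac{g_u f_v}{f_u} + g_v = \frac{g_v f_u - g_u f_v}{f_u}.
\end{equation*}
By assumption \textbf{(A5)} the numerator $g_v f_u - g_u f_v = f_u g_v - f_v g_u > 0$, and by \textbf{(A2)} the denominator $f_u<0$, so the whole expression is strictly negative. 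Again by continuity of all the relevant derivatives (\textbf{(A1)}) and compactness of $\bar\Omega\times[0,M_1]$ (noting $u=F(x,v)$ is itself continuous and hence bounded there), this quotient is bounded above by some $-\sigma''<0$.

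Taking $\sigma_1 = \min\{\sigma',\sigma''\}>0$, the one-sided (and where it exists, two-sided) derivative of $g(x,F_+(x,v),v)$ is $\le -\sigma_1$ at every $v\in[0,M_1]$ except possibly the isolated corner points where $F(x,v)=0$; since $g(x,F_+(x,v),v)$ is continuous in $v$ and locally Lipschitz, integrating this derivative bound across $[v,v_1]$ yields the claimed inequality $g(x,F_+(x,v_1),v_1) - g(x,F_+(x,v),v) \le -\sigma_1(v_1-v)$, which in particular gives strict monotonicity. The main technical point to handle carefully is the matching at the corner $F(x,v)=0$: I would verify that the two regional derivatives agree with the global Lipschitz behavior there, either by observing that $F_v$ is continuous (so the left and right derivatives coincide in the limit since $g_u \cdot 0 = 0$ matches $g(x,0,v)$) or simply by invoking that a continuous function whose derivative is $\le -\sigma_1$ off a finite set still satisfies the integrated bound. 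This finite-corner issue is the only place where the piecewise structure of $F_+$ requires attention.
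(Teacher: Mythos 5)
Your proof is correct, and it rests on the same algebraic heart as the paper's argument---the identity $\partial_v\, g(x,F(x,v),v) = g_u F_v + g_v = (f_u g_v - f_v g_u)/f_u$, strictly negative by \textbf{(A2)} and \textbf{(A5)}---but the technical route is genuinely different. The paper never differentiates the composite function: for fixed $0\le v<v_1\le M_1$ it partitions $\bar\Omega$ into three sets according to the signs of $F(x,v_1)$ and $F(x,v)$ (both nonnegative, mixed, both negative; these exhaust all cases because $F(x,\cdot)$ is strictly decreasing), and applies the mean value theorem to the finite difference on each set; in the mixed case it inserts the term $g(x,F(x,v_1),v_1)$ and discards the increment $g(x,0,v_1)-g(x,F(x,v_1),v_1)<0$ using $g_u<0$ from \textbf{(A3)}. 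You instead differentiate on the two regions $\{F>0\}$ and $\{F<0\}$ and integrate the derivative bound across the corner. Both work; your route has the small advantage of never evaluating $g$ at a negative first argument (the paper's mixed case formally does, slightly outside the stated domain $\bar\Omega\times\mathbb R_+\times\mathbb R_+$), at the price of justifying the integration past a point of non-differentiability. Two remarks on that justification. First, you should state explicitly that $F(x,\cdot)$ is strictly decreasing---immediate from (\ref{ift-F}), \textbf{(A2)}, \textbf{(A3)}, and noted at the start of the paper's proof---since this is exactly what makes the corner a single point for each fixed $x$ and legitimizes your ``finite exceptional set.'' Second, your first proposed fix at the corner is confused: the one-sided derivatives do \emph{not} coincide there, since the left derivative is $g_u F_v+g_v$ while the right derivative is $g_v$, and these differ by $g_u F_v\neq 0$ in general. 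This does not harm the proof, because your second fix is sufficient: a continuous, piecewise $C^1$ function of $v$ whose one-sided derivatives are $\le -\sigma_1$ off a finite set satisfies the integrated bound (apply the estimate on each subinterval and sum), which yields the claimed inequality and, in particular, strict monotonicity.
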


\begin{proof}
Notice that due to \textbf{(A2)}, \textbf{(A3)} and (\ref{ift-F}), $F(x,v)$ is strictly  decreasing in $v\geq 0$.  Thus  we only need discuss the following three situations.
\begin{itemize}
\item In $\{ x\in \bar \Omega \ | \ F(x, v_1 ) \geq 0, F(x,v)>0\}$,
\begin{eqnarray*}
&&g(x,  F_+(x,  v_1 )  ,  v_1) - g(x, F_+(x,v),  v )\\
&=  & g(x,  F(x,  v_1 )   , v_1) - g(x,F(x,v), v)\\
&= &  (v_1-v) \left[   - g_u (x, F(x,\alpha ), \alpha) \frac{f_v (x, F(x,\alpha ), \alpha)}{ f_u (x, F(x,\alpha ), \alpha)}  + g_v (x, F(x,\alpha ), \alpha)\right]\\
&\leq &  -  c_1 (v_1-v),
\end{eqnarray*}
for some $ c_1>0$ due to \textbf{(A5)},  where
$0\leq v \leq \alpha   \leq  v_1\leq M_1.$

\item In $\{ x\in \bar\Omega \ | \ F(x, v_1) < 0, F(x,  v) \geq 0 \}$,
\begin{eqnarray*}
&&g(x,  F_+(x, v_1)    , v_1) - g(x,F_+(x,v),  v )\\
&=  & g(x,  0    ,  v_1) - g(x,F(x, v),  v )\\
&=&   g(x,  0    , v_1)  - g(x,  F(x, v_1)    , v_1)  +g(x,  F(x, v_1)    , v_1) - g(x,F(x,v), v )\\
&< &  (v_1-v)\left[   - g_u (x, F(x,\alpha ), \alpha) \frac{f_v (x, F(x,\alpha ), \alpha)}{ f_u (x, F(x,\alpha ), \alpha)}  + g_v (x, F(x,\alpha ), \alpha)\right]\\
&\leq &  -  c_2 (v_1-v) ,
\end{eqnarray*}
for some $c_2>0$ due to \textbf{(A5)},  where
$0\leq v \leq \alpha   \leq  v_1\leq M_1.$

\item In $\{ x\in\bar \Omega \ | \ F(x,  v_1) < 0, F(x,  v) < 0 \}$,
\begin{eqnarray*}
&&g(x,  F_+(x, v_1)    , v_1) - g(x,F_+(x, v),  v )\\
&=  & g(x,  0   , v_1) - g(x, 0 ,v ) =   (v_1-v) g_v (x, 0, \zeta)  \leq   -   c_3(v_1-v)
\end{eqnarray*}
for some $  c_3>0$ due to \textbf{(A5)}, where
$0\leq v \leq \zeta   \leq  v_1\leq M_1.$
\end{itemize}
Set $\sigma_1= \min \{ c_1, c_2, c_3 \}$ and the proof is complete.
\end{proof}

Next, we prepare the  comparison principle for (\ref{system d=0-single}).

\begin{lem}\label{cp-d=0}
Suppose that \textbf{(A0)}-\textbf{(A3)} and \textbf{(A5)} hold, $v^*, v\in C(\bar\Omega)$ are nonnegative, $v^* \not\equiv 0$, $v\not\equiv 0$ and $v^*, v$ satisfy the following inequalities respectively
$$D\mathcal{P} [v]+  v g(x,F_+(x,v),v )\ge 0\ \ \textrm{in}\  \bar\Omega,$$
$$D\mathcal{P} [v^*]+  v^* g(x,F_+(x,v^*),v^* )\le 0\ \ \textrm{in}\  \bar\Omega.$$
Then either $v^*> v$ or $v^*=v$ in $\bar\Omega$.
\end{lem}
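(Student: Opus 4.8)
The plan is to establish the statement in two stages: first the weak comparison $v^*\ge v$ on $\bar\Omega$, and then a strong maximum principle that upgrades this to the stated dichotomy. Throughout I would write $h(x,s):=g(x,F_+(x,s),s)$, so that the two hypotheses read $D\mathcal{P}[v]+v\,h(x,v)\ge 0$ and $D\mathcal{P}[v^*]+v^*\,h(x,v^*)\le 0$, and I would use repeatedly that, by Lemma \ref{lm-g-decreasing}, $s\mapsto h(x,s)$ is strictly decreasing for each fixed $x$. The essential difficulty is that the full reaction $s\mapsto s\,h(x,s)$ need not be monotone, so evaluating $v^*-v$ at a negative minimum does not by itself yield a contradiction. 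I would circumvent this with a scaling (sliding) device that forces coincident values at a contact point, where the strict monotonicity of $h$ alone can be exploited.

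Before the comparison I would record that $v^*>0$ on all of $\bar\Omega$. Indeed, if $v^*(x_1)=0$, then the supersolution inequality at $x_1$ reduces to $D\int_\Omega p(x_1,y)v^*(y)\,dy\le 0$; since $p\ge 0$ and $v^*\ge 0$, this forces $p(x_1,y)v^*(y)\equiv 0$, and because $p(x_1,x_1)>0$ with $p$ continuous (assumption \textbf{(A0)}), we get $v^*\equiv 0$ on a neighborhood of $x_1$. Hence the zero set of $v^*$ is relatively open; being also closed and $\bar\Omega$ connected, it must be empty (as $v^*\not\equiv0$), so $v^*>0$ and in particular $\min_{\bar\Omega}v^*>0$. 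Consequently the set $\{\gamma\ge 1:\gamma v^*\ge v\ \text{in}\ \bar\Omega\}$ is nonempty and closed; let $\bar\gamma\ge1$ be its infimum. If $\bar\gamma=1$ we already have $v^*\ge v$, so suppose $\bar\gamma>1$. Writing $W:=\bar\gamma v^*-v\ge0$, infimality forces $\min_{\bar\Omega}W=0$ (otherwise a slightly smaller $\gamma$ would still be admissible), so $W$ attains $0$ at some contact point $x_0$, with $\bar\gamma v^*(x_0)=v(x_0)=:V_0$ and $V_0>0$ since $v^*(x_0)>0$.

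At $x_0$ the function $W$ attains its minimum, so $\mathcal{P}[W](x_0)=\int_\Omega p(x_0,y)W(y)\,dy\ge 0$. On the other hand, multiplying the supersolution inequality by $\bar\gamma$ and subtracting the subsolution inequality, both evaluated at $x_0$, gives
$$
D\mathcal{P}[W](x_0)\le -\bar\gamma v^*(x_0)\,h(x_0,v^*(x_0))+v(x_0)\,h(x_0,v(x_0))=V_0\big[h(x_0,v(x_0))-h(x_0,v^*(x_0))\big].
$$
Since $\bar\gamma>1$ and $v^*(x_0)>0$, we have $v(x_0)=\bar\gamma v^*(x_0)>v^*(x_0)$, so strict monotonicity of $h$ makes the bracket negative and the right-hand side strictly negative, contradicting $\mathcal{P}[W](x_0)\ge0$. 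Hence $\bar\gamma=1$ and $v^*\ge v$ in $\bar\Omega$.

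Finally I would run the strong maximum principle on $w:=v^*-v\ge0$ to obtain the dichotomy. At any point $x$ with $w(x)=0$ one has $v^*(x)=v(x)$, so subtracting the two inequalities at $x$ yields $D\mathcal{P}[w](x)\le 0$; but $w\ge0$ with $w(x)=0$ gives $\mathcal{P}[w](x)=\int_\Omega p(x,y)w(y)\,dy\ge0$, whence $\int_\Omega p(x,y)w(y)\,dy=0$ and, exactly as in the positivity argument, $w\equiv0$ near $x$. Thus the zero set of $w$ is relatively open and closed in the connected set $\bar\Omega$, so it is either empty, giving $v^*>v$, or all of $\bar\Omega$, giving $v^*\equiv v$. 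I expect the only genuinely delicate points to be the two uses of the nonlocal strong maximum principle (resting on $p(x,x)>0$, continuity of $p$, and connectedness of $\bar\Omega$) and the observation that scaling by $\bar\gamma$ is precisely what converts the non-monotone reaction into a situation controlled solely by the strict decrease of $h$.
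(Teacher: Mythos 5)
Your proof is correct and follows essentially the same route as the paper's: positivity of $v^*$ via the nonlocal strong maximum principle (open/closed zero set), a sliding argument on $\inf\{\ell : \ell v^* \ge v\}$ with a contradiction at a contact point driven by the strict monotonicity of $s\mapsto g(x,F_+(x,s),s)$ from Lemma \ref{lm-g-decreasing}, and the same open/closed dichotomy to conclude $v^*>v$ or $v^*\equiv v$. The only (harmless) difference is that you exploit $\bar\gamma v^*(x_0)=v(x_0)$ to argue purely at the contact point, whereas the paper first derives the pointwise inequality (\ref{lem-cp-key}) on all of $\bar\Omega$, which additionally uses that $\alpha\mapsto g(x,F_+(x,\alpha),v)$ is increasing.
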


\begin{proof}
First, we claim that {\it $v^* >0$ in $\bar\Omega$.}  Let $\Omega_0 = \{ x \in  \bar\Omega \ |\  v^* = 0 \}$.  Suppose that the claim is not true, then  $\Omega_0 \neq \emptyset $.   Obviously, $\Omega_0$ is closed in $\bar\Omega$.    Choose any $x_1\in \Omega_0$, then at $x=x_1$,
\begin{eqnarray*}
0 &\geq & D\mathcal{P} [v^*](x_1)+  v^*(x_1) g(x_1,F_+(x_1,v^*(x_1)),v^*(x_1) ) \\
  &=& D \int_{\Omega} p(x_1, y) v^*(y) dy.
\end{eqnarray*}
Due to \textbf{(A0)}, this  implies that $v^* = 0$ in a small neighborhood of $x_1$ in $\bar\Omega$. Thus $\Omega_0$ is open in $\bar\Omega$. Hence $\Omega_0 = \bar\Omega$, which is   a contradiction  to $v^* \not\equiv 0$. The claim is proved.

Now since $v^* >0$ in $\bar\Omega$, we have $\ell v^* >v$   in $\bar\Omega$ for $\ell$ large. Define
$$
\ell_* = \inf \left\{ \ell\ |\ \ell v^* >v \ \textrm{in}\ \bar\Omega \right\}.
$$
It is clear that $\ell_*>0$ since $v\not\equiv 0$.

We will further prove that $\ell_* \leq 1$.
Suppose that $\ell_*  > 1$. Let $z= \ell_* v^* -v$. It is clear that $z\geq 0$ and there exists $x_2 \in\bar\Omega$ such that $z(x_2 ) =0$. Thus by Lemma \ref{lm-g-decreasing}, direct computation yields that
\begin{eqnarray}\label{lem-cp-key}
D \mathcal{P} [z] &=&  \ell_* D\mathcal{P} [v^*] - D\mathcal{P} [v] \cr
                 &\leq & -\ell_* v^*g(x,F_+(x,v^*),v^* )  + v g(x,F_+(x,v),v ) \cr
                 & < &  -\ell_* v^*g(x,F_+(x, \ell_*v^*), \ell_*v^* )  + v g(x,F_+(x,\ell_*v^*),v ) \cr
                 &= & -  g(x,F_+(x, \ell_*v^*), \ell_*v^* )z  \cr
                  && -v \frac{g(x,F_+(x, \ell_*v^*), \ell_*v^* )  - g(x,F_+(x, \ell_*v^*), v) }{\ell_*v^* -v }   z.
\end{eqnarray}
Hence at $x=x_2$, we have
$$
D \int_{\Omega} p(x_2, y) z(y) dy < 0,
$$
which is impossible.  Therefore, $0< \ell_* \leq 1$.

At the end, if $\ell_* <1$, then $v^* > \ell_* v^*\geq v$ in $\bar\Omega$. If $\ell_* =1$, then similar to the arguments in the proof of the claim,  by using (\ref{lem-cp-key}), it follows that $z= v^*-v \equiv 0$ is the only possibility. The proof is complete.
\end{proof}

Now, the existence and uniqueness of  positive solutions to (\ref{system d=0-single}) will be characterized as follows.

\begin{prop}\label{prop-d=0-single}
Assume that \textbf{(A0)}-\textbf{(A5)} hold. The problem (\ref{system d=0-single}) admits a unique positive solution $V_0(x)$ in $C(\bar\Omega)$  if and only if $\mu_0>0$, where $\mu_0$ is defined in (\ref{mu0nu0}).
\end{prop}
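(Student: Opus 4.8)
The plan is to prove the equivalence by combining three ingredients already in place: the strict monotonicity of $v\mapsto g(x,F_+(x,v),v)$ from Lemma \ref{lm-g-decreasing}, the comparison principle of Lemma \ref{cp-d=0}, and the two-sided variational characterization of the principal spectral point in Theorem \ref{thm-eigenv-general}, which applies verbatim to $D\mathcal P+\gamma$ since $p$ satisfies the same hypotheses as $k$. Two preliminary observations come first. Uniqueness is immediate: if $v_1,v_2$ are positive solutions, each satisfies both the super- and sub-solution inequalities of Lemma \ref{cp-d=0}, so applying that lemma with $(v^*,v)=(v_1,v_2)$ and then with the roles reversed forces $v_1\equiv v_2$. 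Next I record the a priori bound $0<V_0<M$ and $V_0>0$ on all of $\bar\Omega$: evaluating (\ref{system d=0-single}) at a maximum point $x_0$ and using $\mathcal P[V_0](x_0)\le 0$ (because $\int_\Omega p(x_0,y)\,dy\le 1$) yields $g(x_0,F_+(x_0,V_0(x_0)),V_0(x_0))\ge 0$, which by \textbf{(A4)} is impossible if $V_0(x_0)\ge M$; strict positivity follows from the neighbourhood-propagation argument used in the proof of Lemma \ref{cp-d=0}.

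For the existence direction ($\mu_0>0\Rightarrow$ existence) I would build an ordered pair of sub/super-solutions and run a monotone iteration. The constant $\bar v\equiv M$ is a supersolution, since $D\mathcal P[M]\le 0$ and $g(x,F_+(x,M),M)<0$ by \textbf{(A4)}. For the subsolution I would deliberately avoid assuming $\mu_0$ is attained by a positive eigenfunction and instead use the sup-characterization in (\ref{eigenv-Wang}): as $\mu_0>0$ there is $\phi_0\in C(\bar\Omega)$, $\phi_0>0$, with $\kappa:=\inf_x\big(D\mathcal P[\phi_0]+g(x,F_+(x,0),0)\phi_0\big)/\phi_0\ge\mu_0/2>0$. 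Setting $\underline v=\varepsilon\phi_0$ and expanding, the subsolution inequality reduces to $\kappa+\big(g(x,F_+(x,\varepsilon\phi_0),\varepsilon\phi_0)-g(x,F_+(x,0),0)\big)\ge 0$, which holds for $\varepsilon$ small by uniform continuity of $g(x,F_+(x,\cdot),\cdot)$; shrinking $\varepsilon$ further gives $\underline v\le\bar v$. Choosing $K$ so large that $v\mapsto vg(x,F_+(x,v),v)+Kv$ is nondecreasing on $[0,M]$, the explicit map $T$ defined by $(D+K)(Tv)=D\int_\Omega p(\cdot,y)v(y)\,dy+vg(x,F_+(x,v),v)+Kv$ is order-preserving, maps $C(\bar\Omega)$ into itself, and its fixed points are exactly the solutions of (\ref{system d=0-single}). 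Iterating from $\underline v$ produces an increasing sequence bounded above by $\bar v$; its pointwise limit $V_0$ solves the equation in $L^\infty$, and the continuity of $V_0$ follows from the implicit-function-theorem argument of Lemma \ref{lm-smoothness} (at a solution $\int_\Omega p(x,y)V_0(y)\,dy>0$ forces $D-g(x,F_+(x,V_0),V_0)>0$, and the $s$-derivative of $s[D-g]-D\!\int p V_0$ is strictly positive).

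For the converse I argue the contrapositive: a positive solution $V_0$ forces $\mu_0>0$. Writing $\gamma_1(x):=g(x,F_+(x,V_0),V_0)$, the identity $D\mathcal P[V_0]+\gamma_1V_0=0$ means $(D\mathcal P[V_0]+\gamma_1V_0)/V_0\equiv 0$, so feeding $\phi=V_0$ into both formulas of (\ref{eigenv-Wang}) pins down $\lambda_1(D\mathcal P+\gamma_1)=0$ exactly. Since $V_0$ is continuous and strictly positive on the compact $\bar\Omega$, it is bounded below by some $c>0$, and Lemma \ref{lm-g-decreasing} gives $\gamma_0-\gamma_1\ge\sigma_1c=:\rho>0$ uniformly, where $\gamma_0(x):=g(x,F_+(x,0),0)$. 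Then the sup-characterization for $\gamma_0=\gamma_1+(\gamma_0-\gamma_1)$ yields $\mu_0=\lambda_1(D\mathcal P+\gamma_0)\ge\lambda_1(D\mathcal P+\gamma_1)+\rho=\rho>0$. This contradicts $\mu_0\le 0$ and completes the equivalence, with $\mu_0$ given by (\ref{mu0nu0}).

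The step I expect to be the main obstacle is the construction of the subsolution. For nonsymmetric nonlocal operators the spectral bound $\mu_0$ need not be an eigenvalue with a positive continuous eigenfunction, so the classical recipe of perturbing the principal eigenfunction is unavailable; replacing it with a near-optimal test function $\phi_0$ drawn from the variational formula (\ref{eigenv-Wang}) is precisely what makes the argument robust, and the same two-sided formula is what converts the mere existence of $V_0$ into the strict sign $\mu_0>0$ in the converse. The remaining analytic points — order-preservation and the continuous-limit bootstrap in the iteration, and passage to the limit inside the nonlocal integral — are routine given \textbf{(A0)} and the uniform bound $0\le v\le M$.
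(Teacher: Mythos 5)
Your proposal is correct, and it reaches the statement by a genuinely different route than the paper in two of the three components. For existence under $\mu_0>0$, the paper does not construct sub/supersolutions for (\ref{system d=0-single}) directly: it iterates a family of single-species problems, taking $V_k$ to be the unique positive solution of $D\mathcal{P}[v]+vg(x,F_+(x,V_{k-1}),v)=0$, whose solvability at each step is quoted from \cite[Theorem 2.1]{BaiLi2017} because the associated spectral bound is $\geq\mu_0>0$; the $V_k$ increase and are bounded by $\eta_D$, and the limit is the solution. Your route --- constant supersolution $M$, subsolution $\varepsilon\phi_0$ with $\phi_0$ a near-optimal test function drawn from (\ref{eigenv-Wang}), and a $K$-shifted order-preserving map $T$ --- is more self-contained: it needs neither the external single-species theorem nor the existence of $\eta_D$, and your observation that a near-optimizer of the sup-inf formula can replace a (possibly nonexistent) principal eigenfunction is exactly the right device for nonsymmetric kernels; what the paper's scheme buys in exchange is brevity (solvability of each iterate is off the shelf) and the extra comparison $V_0<\eta_D$. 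For uniqueness, the paper argues at greater length: it shows $V\leq V^*$ by comparing the iterates $V_k$ with $V^*$, upgrades to $V<V^*$ via Lemma \ref{cp-d=0}, and then derives a contradiction from the scaling quantity $\ell_1=\inf\{\ell\ |\ \ell V>V^*\}$ using Lemma \ref{lm-g-decreasing}; your two-line alternative --- apply Lemma \ref{cp-d=0} twice with the roles of the two solutions swapped, since a solution is simultaneously a sub- and a supersolution --- is valid as the lemma is stated and is strictly simpler. The necessity direction is essentially the paper's argument: both plug $V_0$ into the sup-inf formula of Theorem \ref{thm-eigenv-general} and invoke Lemma \ref{lm-g-decreasing} together with $\min_{\bar\Omega}V_0>0$ to obtain a strictly positive gap; your detour through the identity $\lambda_1(D\mathcal{P}+\gamma_1)=0$ is harmless. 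One point to flag, though it does not put you below the paper's own level of rigor (the paper calls this step ``standard''): your implicit-function-theorem sketch for continuity of the $L^\infty$ limit needs global, not merely local, uniqueness of the positive root $s$ of $s\bigl(D-g(x,F_+(x,s),s)\bigr)=D\int_\Omega p(x,y)V_0(y)\,dy$, since a priori $V_0$ could jump between branches; this global uniqueness does follow from Lemma \ref{lm-g-decreasing} (the left-hand side is strictly increasing in $s$ wherever it is positive), so the gap is fillable at the same cost as in the paper.
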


\begin{proof}
On the one hand, assume that $V_0(x)$ is  a positive solution to (\ref{system d=0-single}). Then
\begin{eqnarray*}
 \mu_0 & =&\sup \left\{\textrm{Re}\, \lambda\, |\, \lambda\in \sigma(D\mathcal{P}+ g(x, F_+(x,0), 0))   \right\}\\
             &=&  \sup_{\phi\in C(\bar\Omega),\, \phi>0 \ \textrm{on} \ \bar\Omega} \inf_{x\in\Omega} {D \mathcal{P}[\phi]  + g(x, F_+(x,0), 0)\phi(x) \over \phi (x)}\\
             &\geq &  \inf_{x\in\Omega} {D \mathcal{P}[V_0 ]  + g(x, F_+(x,0), 0)V_0(x) \over V_0  (x)}\\
             & = & \inf_{x\in\Omega} \ \left[-g(x, F_+(x,V_0), V_0) +  g(x, F_+(x,0), 0) \right]\\
             & >& 0,
\end{eqnarray*}
since $g(x,F_+(x,v),v )$ is strictly decreasing in $v>0$  due to \textbf{(A2)}, \textbf{(A3)}, \textbf{(A5)} and (\ref{ift-F}).

On the other hand, assume that $\mu_0>0$. We first point out that $g(x,F_+(x, \alpha),v )$ is strictly increasing in $\alpha>0$ due to \textbf{(A2)}, \textbf{(A3)} and (\ref{ift-F}). This simple property will be used repeatedly for the rest of the proof.

By   \cite[Theorem 2.1]{BaiLi2017}, $\mu_0>0$ implies that
\begin{equation}\label{pf-v1}
D\mathcal{P} [v]+ v g(x,F_+(x,0),v )=0
\end{equation}
admits a unique positive solution, denoted by $V_1 \in \mathbb X$. For $k\geq 2$, let $V_k$ denotes the unique positive solution to
\begin{equation}\label{pf-vk}
D \mathcal{P}[v]  + v g(x,F_+(x,V_{k-1}),v ) =0.
\end{equation}
Note that due to \textbf{(A2)}, \textbf{(A3)} and (\ref{ift-F})
$$
\sup \left\{\textrm{Re}\, \lambda\, |\, \lambda\in \sigma(D\mathcal{P}+ g(x,F_+(x,V_{k-1}),0 ))   \right\}\geq \mu_0 >0.
$$
Thus the existence and uniqueness of $V_k$ is guaranteed by \cite[Theorem 2.1]{BaiLi2017}

Moreover, due to \textbf{(A2)}, \textbf{(A3)} and (\ref{ift-F}) again, it is routine to show that
$$
0<V_1< ... < V_k  < \eta_D,\ \ k\geq 1.
$$
Hence $V= \lim_{k\rightarrow \infty} V_k$ is a positive solution of (\ref{system d=0-single}) in $L^{\infty}(\Omega)$.
Moreover, it is standard to verify that $V\in C(\bar\Omega)$ due to \textbf{(A5)}.

At the end, suppose that  (\ref{system d=0-single}) has another positive solution $V^*\in C(\bar\Omega)$ and $V^* \not\equiv V$. First, based on the equations  (\ref{system d=0-single}) and (\ref{pf-v1}), one sees  that $V_1\leq V^* $ due to \textbf{(A2)}, \textbf{(A3)} and (\ref{ift-F}).  Then by  the equations  (\ref{system d=0-single}) and (\ref{pf-vk}), \textbf{(A2)}, \textbf{(A3)} and (\ref{ift-F}) yield that $V_k\leq V^*$, $k=2,3,...$, which implies that $V\leq V^*.$ Thanks to Lemma \ref{cp-d=0}, we have $V< V^*$ in $\bar\Omega$.

Define
$$
\ell_1 = \inf \left\{ \ell\ |\ \ell V >V^* \ \textrm{in}\ \bar\Omega \right\}.
$$
Obviously, $\ell _1>1$ since $V< V^*$ in $\bar\Omega$. Then  thanks to Lemma \ref{lm-g-decreasing},
\begin{eqnarray}\label{pf-unique}
D\mathcal{P} [\ell_1 V]+  \ell_1 V   g(x,F_+(x,\ell_1 V), \ell_1 V )
                 <   D\mathcal{P} [\ell_1 V]+  \ell_1 V g(x,F_+(x, V),   V )=0.
\end{eqnarray}
Again, by Lemma \ref{cp-d=0}, we have either $\ell_1 V\equiv V^*$ or $\ell_1 V > V^*$ in $\bar\Omega$. The former case contradicts to (\ref{pf-unique}), while the latter case is a contradiction to the definition of $\ell_1$. Therefore, the positive solution of (\ref{system d=0-single}) is unique. The proof is complete.
\end{proof}

Finally, we are ready to demonstrate the main result in this subsection.

\begin{proof}[Proof of Theorem \ref{thm-system-d=0}]
Notice that if  $V_0$ is a positive solution of (\ref{system d=0-single}), then $(u,v)= (F_+(x,V_0),V_0)$ is a solution of the limiting system (\ref{system d=0}). Thus it is clear that the first part  is equivalent to Proposition \ref{prop-d=0-single}.

It remains to verify that $u\equiv 0$ if and only if $\nu_D^0 \le 0$. This is obvious since $u\equiv 0$ if and only if $v=\eta_D$.
\end{proof}

\subsection{Relations between systems (\ref{system d=0}) and (\ref{stationary})}

In this subsection, we will demonstrate that (\ref{system d=0}) is the limiting system of (\ref{stationary}) as $d\rightarrow 0^+$ by characterizing  the asymptotic behavior of the solutions of (\ref{stationary}) as $d\rightarrow 0^+$.

\begin{thm}\label{thm-asymptotic}
Suppose that \textbf{(A0)}-\textbf{(A5)} hold and $(u_d, v_d)$ is a positive solution of the system (\ref{stationary}).  If $\mu_0>0$, there exist $C_2, C_3 >0$ such that
$$
\left( U_0 -C_2 d \right)_+ < u_d < U_0 + C_2 \sqrt{d}, \  ( V_0 - C_3 \sqrt{d}  )_+ < v_d < V_0 + C_3 d\ \ \textrm{in} \ \bar\Omega,
$$
where $(U_0, V_0)$  is a solution to (\ref{system d=0}), while the existence and uniqueness of  $V_0$   is  guaranteed by Proposition \ref{prop-d=0-single}.
\end{thm}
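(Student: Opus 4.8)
The plan is to estimate $u_d$ and $v_d$ in turn, exploiting that the first equation of (\ref{stationary}) is, for frozen $v=v_d$, a nonlocal logistic equation to which the analysis of Proposition \ref{prop-d=0-theta} applies. I would begin by recording the uniform bounds $0<u_d,v_d<M$ from \textbf{(A4)} and the strong maximum principle, so that the constants below (in particular $f_u\le -c_0$, the Lipschitz constant of $g$ in $u$, and bounds for $F_+$ and $F_v$) may be taken uniform on the compact set $\bar\Omega\times[0,M]\times[0,M]$.

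\emph{Step 1 (sandwiching $u_d$).} Freezing $v=v_d$ in the first equation, I would repeat the super/subsolution computation of Proposition \ref{prop-d=0-theta} with the extra frozen argument: $\hat u=F_+(x,v_d)+c_1\sqrt d$ satisfies $d\mathcal{K}[\hat u]+\hat u f(x,\hat u,v_d)<0$ and $\underline u=(F_+(x,v_d)-c_2d)_+$ satisfies $d\mathcal{K}[\underline u]+\underline u f(x,\underline u,v_d)\ge 0$, with constants uniform in $v_d\in[0,M]$ since $f(x,F_+(x,v),v)\le 0$ and $f_u\le -c_0$ throughout. The one-species analogue of Lemma \ref{cp-d=0} (cf. \cite{BaiLi2017}) then gives $(F_+(x,v_d)-Cd)_+<u_d<F_+(x,v_d)+C\sqrt d$. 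Inserting this into $g$ and using $g_u<0$ with the Lipschitz bound on $g$ in $u$ yields the two-sided estimate
$$
g(x,F_+(x,v_d),v_d)-C'\sqrt d<g(x,u_d,v_d)<g(x,F_+(x,v_d),v_d)+C'd .
$$

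\emph{Step 2 (sandwiching $v_d$).} Substituting this into the exact identity $D\mathcal{P}[v_d]+v_dg(x,u_d,v_d)=0$ shows that $v_d$ is a strict subsolution of $D\mathcal{P}[v]+v[g(x,F_+(x,v),v)+C'd]=0$ and a strict supersolution of $D\mathcal{P}[v]+v[g(x,F_+(x,v),v)-C'\sqrt d]=0$. Each modified nonlinearity differs from $g(x,F_+(x,v),v)$ by a constant, hence is still strictly decreasing in $v$ and still obeys Lemma \ref{lm-g-decreasing} with the same $\sigma_1$, so Lemma \ref{cp-d=0} applies to it verbatim. The delicate point is the choice of barriers: an additive barrier $V_0\pm\mathrm{const}\cdot d$ fails because its leading residual carries the factor $g(x,F_+(x,V_0),V_0)=-D\mathcal{P}[V_0]/V_0$, which has no definite sign. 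I would instead use the multiplicative barriers $\bar V=(1+Ad)V_0$ and $\underline V=(1-B\sqrt d)V_0$; since $D\mathcal{P}$ is linear and $V_0$ solves (\ref{system d=0-single}), the zeroth-order term cancels and Lemma \ref{lm-g-decreasing} produces a definite-sign remainder, e.g.
$$
D\mathcal{P}[\bar V]+\bar V\big[g(x,F_+(x,\bar V),\bar V)+C'd\big]\le (1+Ad)\,d\,V_0\big[C'-\sigma_1 A\min_{\bar\Omega}V_0\big]\le 0 ,
$$
and symmetrically $D\mathcal{P}[\underline V]+\underline V[g(x,F_+(x,\underline V),\underline V)-C'\sqrt d]\ge 0$, once $A,B$ are large relative to $C'/(\sigma_1\min_{\bar\Omega}V_0)$, where $\min_{\bar\Omega}V_0>0$ by the positivity argument of Lemma \ref{cp-d=0}. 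Comparison then yields $(V_0-C_3\sqrt d)_+<v_d<V_0+C_3d$.

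\emph{Step 3 (from $F_+(x,v_d)$ to $U_0$).} Finally, since $F_+$ is Lipschitz and decreasing in $v$ (as $F_v=-f_v/f_u<0$), the $v_d$-bounds convert the Step 1 inequalities into bounds against $U_0=F_+(x,V_0)$. Here the asymmetry of the rates must be tracked: the lower bound $u_d>(F_+(x,v_d)-Cd)_+$ is paired with the $O(d)$ upper bound $v_d<V_0+C_3d$ to preserve an $O(d)$ gap, while the upper bound $u_d<F_+(x,v_d)+C\sqrt d$ is paired with the $O(\sqrt d)$ lower bound on $v_d$, giving $(U_0-C_2d)_+<u_d<U_0+C_2\sqrt d$. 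I expect the main obstacle to be the quantitative comparison of Step 2: realizing that the additive barrier is doomed and that the correct device is the homogeneity-exploiting multiplicative barrier whose residual is controlled by Lemma \ref{lm-g-decreasing}, and then legitimately invoking Lemma \ref{cp-d=0} for the constant-shifted nonlinearities while keeping the $\sqrt d$-versus-$d$ asymmetry straight.
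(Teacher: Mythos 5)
Your proposal is correct and follows essentially the same route as the paper's proof: the same frozen-$v$ super/subsolution sandwich giving $(F_+(x,v_d)-c_2d)_+<u_d<F_+(x,v_d)+c_1\sqrt{d}$, the same multiplicative barriers $(1+\const\cdot d)V_0$ and $(1-\const\cdot\sqrt{d})V_0$ whose residuals are signed via Lemma \ref{lm-g-decreasing} after the zeroth-order term cancels, and the same $d$-versus-$\sqrt{d}$ rate pairing when converting back to bounds on $u_d$ against $U_0$. The only difference is organizational: the paper keeps the $u_d$-perturbation inside the first argument of $g$ and sandwiches $v_d$ between the unique positive solutions $\hat v$, $\underline v$ of the auxiliary problems (\ref{pf-thm-hat v}) and (\ref{pf-thm-underline v}) before comparing those with the barriers, whereas you absorb the perturbation into additive constants $\pm C'd$, $\pm C'\sqrt{d}$ and compare $v_d$ with the barriers directly through the constant-shifted version of Lemma \ref{cp-d=0} (legitimate, since adding a constant preserves the decrease modulus $\sigma_1$), which spares you the existence/uniqueness step for the auxiliary problems.
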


\begin{proof}
For clarity, we divide the proof  into several steps. Note that $0< u_d, v_d < M$ due to \textbf{(A4)}.
Then due to  \textbf{(A2)}, there exists $c_0>0 $ such that
$$
f_u(x,u,v) \leq -c_0\ \ \textrm{for}\  x\in \bar\Omega, u,v \in [0,M+1].
$$

{\it Step 1.} Regard $v_d$ as  a given function first and  thus $u_d$ is unique due to \textbf{(A2)}. Set $\hat u = F_+(x, v_d) + c_1 \sqrt{d}$ with $c_1>0$. Then by direct computation, we have
\begin{eqnarray*}
&&  d\mathcal{K}[\hat u] + \hat u f(x, \hat u, v_d)\\
& \leq & d\int_{\Omega} k(x,y ) \left (F_+(y, v_d(y)) + c_1 \sqrt{d}\right) dy \\
&&  + \left ( F_+(x, v_d) + c_1 \sqrt{d}  \right)f(x, F_+(x, v_d) + c_1 \sqrt{d} , v_d)\\
& \leq & d\int_{\Omega} k(x,y ) \left (F_+(y, v_d(y)) + c_1 \sqrt{d}\right) dy \\
&&  + \left ( F_+(x, v_d) + c_1 \sqrt{d}  \right)\left[ f(x, F_+(x, v_d) + c_1 \sqrt{d} , v_d) - f(x, F_+(x, v_d) , v_d)\right]  \\
&\leq & d\int_{\Omega} k(x,y ) \left (F_+(y, v_d(y)) + c_1 \sqrt{d}\right) dy - c_0c_1 \sqrt{d} \left ( F_+(x, v_d) + c_1 \sqrt{d}  \right)\\
&< & 0,
\end{eqnarray*}
if $c_1$ is large enough and fix it.

Set $\underline u =  (F_+(x, v_d) -c_2 d)_+$ with $c_2>0$. Then it follows that
\begin{eqnarray*}
&&  d\mathcal{K}[\underline u] + \underline u f(x, \underline u, v_d)\\
& \geq   & -d (F_+(x, v_d) -c_2 d)_+ + (F_+(x, v_d) -c_2 d)_+  f(x, (F_+(x, v_d) -c_2 d)_+, v_d)\\
&=& -d (F_+(x, v_d) -c_2 d)_+  \\
&& +  (F_+(x, v_d) -c_2 d)_+ \left[f(x, F_+(x, v_d) -c_2 d, v_d) - f(x, F_+(x, v_d) , v_d)  \right] \\
&\geq & (F_+(x, v_d) -c_2 d)_+  \left( -d  + c_0 c_2 d \right)\\
&\geq & 0
\end{eqnarray*}
if $c_2$ is large enough and fix it.

Now  we have derived that
\begin{equation}\label{pf-thm-i-ud}
(F_+(x, v_d) -c_2 d)_+ < u_d < F_+(x, v_d) + c_1 \sqrt{d} \ \ \textrm{in} \ \bar\Omega
\end{equation}
by upper/lower solution method.

{\it Step 2.}  Let us give a preliminary estimate of $v_d$. Since $\mu_0>0$, then similar to the proof  of  Proposition \ref{prop-d=0-single}, for $d$ sufficiently small, the following problem
\begin{equation}\label{pf-thm-hat v}
D\mathcal{P}[v] + v g(x, (F_+(x, v) -c_2 d)_+ , v) =0
\end{equation}
admits a unique positive solution, denoted by $\hat v$. Similarly, for $d$ sufficiently small, the problem
\begin{equation}\label{pf-thm-underline v}
D\mathcal{P}[v] + v g(x,  F_+(x, v) + c_1 \sqrt{d} , v) =0
\end{equation}
admits a unique positive solution, denoted by $\underline v$. According to (\ref{pf-thm-i-ud}), it is standard to check that $v_d$ is a lower solution of (\ref{pf-thm-hat v}). Together with the fact that $\hat v$ is the unique solution of (\ref{pf-thm-hat v}), one sees that $v_d < \hat v$  in  $\bar\Omega.$ Similarly, we have $\underline v < v_d$  in  $\bar\Omega.$ Hence
\begin{equation}\label{pf-thm-preliminary}
\underline v <  v_d < \hat v \ \ \textrm{in}\ \bar\Omega.
\end{equation}

{\it Step 3.} Now we could improve the estimate of $v_d$ and relate it to $V_0$. On the one side, consider $(1+c_3d)V_0$ with $c_3> 0$ and compute as follows
\begin{eqnarray}\label{pf-thm>hat v}
&& D\mathcal{P} [(1+c_3d)V_0 ] + (1+c_3d)V_0 g(x,  (F_+(x, (1+c_3d)V_0) -c_2 d)_+ , (1+c_3d)V_0)\cr
&=&   (1+c_3d) V_0 \left[ - g\left(x,F_+(x,V_0),V_0 \right)+ g \left(x,  (F_+(x, (1+c_3d)V_0) -c_2 d)_+ , (1+c_3d)V_0 \right)\right]\cr
&\leq & (1+c_3d) V_0 \left[ - g(x,F_+(x,V_0),V_0 )+ g(x,  F_+(x, (1+c_3d)V_0) -c_2 d  , (1+c_3d)V_0)\right]\cr
& =  &(1+c_3d) V_0 \left[ - g(x,F_+(x,V_0),V_0 )+ g(x,  F_+(x, (1+c_3d)V_0)    , (1+c_3d)V_0)\right]\cr
&&   -c_2 d (1+c_3d) V_0   g_u(x, \xi , (1+c_3d)V_0)\cr
&\leq & -d (1+c_3d) V_0 \left[ \sigma_1 c_3   V_0    + c_2   g_u(x, \xi , (1+c_3d)V_0) \right] \cr
&< & 0,
\end{eqnarray}
if $c_3$ is large enough and fix it,
where
$$
F_+(x, (1+c_3d)V_0) -c_2 d \leq \xi= \xi(x)\leq F_+(x, (1+c_3d)V_0),
$$
the second inequality is due to Lemma \ref{lm-g-decreasing} and $\sigma_1>0$ is some constant.

This indicates that $(1+c_3d)V_0$ is a upper solution of (\ref{pf-thm-hat v}). Since $\hat v$ is the unique solution of (\ref{pf-thm-hat v}), we have
\begin{equation}\label{pf-thm-further-hat v}
\hat v < (1+c_3d)V_0\ \ \textrm{in}\ \bar\Omega.
\end{equation}

On the other side, consider $(1- c_4 \sqrt{d}) V_0$ with $c_4>0$ and compute as follows
\begin{eqnarray*}
&& D\mathcal{P}[(1- c_4 \sqrt{d}) V_0] + (1- c_4 \sqrt{d}) V_0 g (x,  F_+(x, (1- c_4 \sqrt{d}) V_0) + c_1 \sqrt{d} , (1- c_4 \sqrt{d}) V_0 ) \\
&= & (1- c_4 \sqrt{d}) V_0 \left[  -g\left(x,F_+(x,V_0),V_0 \right) +   g (x,  F_+(x, (1- c_4 \sqrt{d}) V_0) + c_1 \sqrt{d} , (1- c_4 \sqrt{d}) V_0 )   \right]\\
&= & (1- c_4 \sqrt{d}) V_0 \left[  -g\left(x,F_+(x,V_0),V_0 \right) +   g (x,  F_+(x, (1- c_4 \sqrt{d}) V_0)  , (1- c_4 \sqrt{d}) V_0 )   \right]\\
&&  + c_1 \sqrt{d}(1- c_4 \sqrt{d}) V_0  g_u (x,  \xi , (1- c_4 \sqrt{d}) V_0 )\\
&\geq &  \sqrt{d}(1- c_4 \sqrt{d}) V_0 \left[\sigma_2 c_4   V_0 + c_1    g_u (x,  \xi , (1- c_4 \sqrt{d}) V_0 )\right]\\
&>&  0,
\end{eqnarray*}
if $c_4$ is large enough and fix it,
where
$$
(1- c_4 \sqrt{d}) V_0  \leq \xi= \xi(x)\leq (1- c_4 \sqrt{d}) V_0 + c_1 \sqrt{d},
$$
the  inequality is due to Lemma \ref{lm-g-decreasing} and $\sigma_2>0$ is some constant.

Similarly, this implies that $(1- c_4 \sqrt{d}) V_0$ is a lower solution of (\ref{pf-thm-underline v}). Thus the uniqueness of the positive solution to (\ref{pf-thm-underline v}) yields that
\begin{equation}\label{pf-thm-further-underline v}
(1- c_4 \sqrt{d}) V_0 <  \underline v  \ \ \textrm{in}\ \bar\Omega.
\end{equation}

At the end, it follows from (\ref{pf-thm-preliminary}), (\ref{pf-thm-further-hat v}) and (\ref{pf-thm-further-underline v})   that
\begin{equation}\label{pf-thm-final}
(1- c_4 \sqrt{d}) V_0 < \underline v <  v_d < \hat v < (1+c_3d)V_0 \ \ \textrm{in}\ \bar\Omega.
\end{equation}
This, together with (\ref{pf-thm-i-ud}), gives the desired conclusion.
\end{proof}

\section{Proof of Theorem \ref{thm-main}}

For clarity, we prove Theorem \ref{thm-main}(ii) and (iii) first since it is easier.

\begin{proof}[Proof of Theorem \ref{thm-main}(ii)]
The assumptions in Theorem \ref{thm-main}(ii), Lemma \ref{lm-d=0} and Theorem \ref{thm-monotone} together indicate that to prove   Theorem \ref{thm-main}(ii), it suffices to show that the system (\ref{original}) admits no positive steady states if $d$ is sufficiently small.

Suppose that   there exists a sequence  $\{ d_i\}_{i\geq 1}$ with $\lim_{i\rightarrow \infty} d_i=0^+$ such that for $d=d_i$,  the problem as follows admits a positive solution $(u_i, v_i)\in \mathbb X \times \mathbb X$
\begin{equation}\label{thm-pf-i eq}
\begin{cases}
d_i \mathcal{K}[u]  +u f(x,u,v)=0 &\textrm{in } \Omega,\\
D \mathcal{P}[v]  +v g(x,u,v)=0 &\textrm{in } \Omega.
\end{cases}
\end{equation}

Thanks to Theorem \ref{thm-asymptotic}, we have
$$
\lim_{i\rightarrow \infty} u_i = U_0 = F_+(x, V_0),\ \lim_{i\rightarrow \infty} v_i = V_0 \  \ \textrm{in} \ \mathbb X.
$$
Note that by Theorem \ref{thm-system-d=0}, the positivity of $V_0$ is guaranteed by $\mu_0>0$, while $\nu_D^0< 0$ implies that $U_0 \equiv 0$. Thus, in fact, $V_0= \eta_D$. Then due to the  first  equation in (\ref{thm-pf-i eq}), one sees that
\begin{eqnarray*}
0  =  \inf_{\phi\in C(\bar\Omega),\, \phi>0 \ \textrm{on} \ \bar\Omega} \sup_{x\in\Omega} {d_i \mathcal{K}[\phi]  + f(x, u_i, v_i) \phi \over \phi }\leq   \sup_{x\in\Omega}  \left\{ d_i \mathcal{K}[1]  + f(x, u_i, v_i)\right\}.
\end{eqnarray*}
Therefore,
$$
0\leq \lim_{i\rightarrow \infty}\sup_{x\in\Omega}  \left\{ d_i \mathcal{K}[1]  + f(x, u_i, v_i)\right\} = \max_{x\in\bar\Omega}\  f(x,0,\eta_D) = \nu_D^0< 0.
$$
This is a contradiction.

\end{proof}

\begin{proof}[Proof of Theorem \ref{thm-main}(iii)]
First of all, we claim that {\it if $\mu_0<0$, then $\nu_D^0 >0$.}

Recall that
\begin{eqnarray*}
0>\mu_0 &= & \sup \left\{\textrm{Re}\, \lambda\, |\, \lambda\in \sigma(D\mathcal{P}+ g(x, F_+(x,0), 0))   \right\}\\
 &=&  \sup_{\phi\in C(\bar\Omega),\, \phi>0 \ \textrm{on} \ \bar\Omega} \inf_{x\in\Omega} {D \mathcal{P}[\phi]  + g(x, F_+(x,0), 0) \phi \over \phi }\\
 &\geq & \inf_{x\in\Omega} {D \mathcal{P}[\eta_D]  + g(x, F_+(x,0), 0 ) \eta_D \over  \eta_D }\\
 & = &  \inf_{x\in\Omega}  \left[ -g(x, 0 ,\eta_D)    +  g(x, F_+(x,0), 0)\right]\\
 & > &\inf_{x\in\Omega}  \left[ -g(x, 0 ,\eta_D)    +  g(x, F_+(x,\eta_D), \eta_D)\right],
\end{eqnarray*}
where the last inequality is  due to Lemma \ref{lm-g-decreasing}. This indicates that there exists $x^*\in\bar\Omega$ such that
$$
F_+(x^*,\eta_D(x^*)) = F(x^*,\eta_D(x^*))>0.
$$
Hence according to \textbf{(A2)} and the definition of $F$, it follows that
$$
\nu_D^0 = \max_{x\in\bar\Omega}\  f(x,0,\eta_D) \geq     f(x^*,0,\eta_D(x^*)) >0.
$$
The claim is proved.

Now by  Lemma \ref{lm-d=0} and Theorem \ref{thm-monotone},  to prove   Theorem \ref{thm-main}(iii), it suffices to show that the system (\ref{original}) admits no positive steady states if $d$ is sufficiently small.

Suppose that   there exists a sequence  $\{ d_i\}_{i\geq 1}$ with $\lim_{i\rightarrow \infty} d_i=0^+$ such that for $d=d_i$,  the problem (\ref{thm-pf-i eq})  admits a positive solution $(u_i, v_i)\in \mathbb X \times \mathbb X$.  Then
\begin{eqnarray}\label{thm-pf-(iii)}
0>\mu_0 &= & \sup \left\{\textrm{Re}\, \lambda\, |\, \lambda\in \sigma(D\mathcal{P}+ g(x, F_+(x,0), 0))   \right\}\cr
 &=&  \sup_{\phi\in C(\bar\Omega),\, \phi>0 \ \textrm{on} \ \bar\Omega} \inf_{x\in\Omega} {D \mathcal{P}[\phi]  + g(x, F_+(x,0), 0) \phi \over \phi }\cr
 &\geq & \inf_{x\in\Omega} {D \mathcal{P}[v_i]  + g(x, F_+(x,0), 0 ) v_i \over  v_i }\cr
 & = &  \inf_{x\in\Omega}  \left[ -g(x, u_i , v_i)    +  g(x, F_+(x,0), 0)\right]\cr
 &>  & \inf_{x\in\Omega}  \left[ -g(x, u_i , v_i)    + g(x, F_+(x, v_i), v_i )\right],
\end{eqnarray}
where the last inequality is  due to Lemma \ref{lm-g-decreasing}.

Moreover, notice that regardless of whether $\mu_0>0$ or not,  the estimate in (\ref{pf-thm-i-ud})  in the proof of Theorem \ref{thm-asymptotic} always hold.
Hence it follows  from (\ref{thm-pf-(iii)}) that
\begin{eqnarray*}
0>\mu_0  &\geq & \inf_{x\in\Omega}  \left[ -g(x, u_i , v_i)    + g(x, F_+(x, v_i), v_i )\right] \\
& \geq  & \inf_{x\in\Omega}   \left[ -g(x,(F_+(x, v_i) -c_2 d_i)_+,v_i)    +  g(x, F_+(x,v_i), v_i) \right],
\end{eqnarray*}
which yields a contradiction by letting $d_i \rightarrow 0^+$.
\end{proof}

The rest of this section is devoted to the  proof of Theorem \ref{thm-main}(i).

\begin{proof}[Proof of Theorem \ref{thm-main}(i)]
According to   Lemma \ref{lm-d=0},  Lemma \ref{lm-smoothness} and Theorem \ref{thm-monotone}, it is clear that we only need   verify the uniqueness of positive steady states in $\mathbb X\times \mathbb X$  when $d$ is sufficiently small.

Suppose that there exists a sequence
$\{d_i \}_{i\geq 1}$ with $\lim_{i\rightarrow \infty  }d_i =0^+$  such that the system (\ref{original}) with $d=d_i$ admits two different positive steady states $(u_i,v_i)$ and $(u_i^*,v_i^*)$ in   $\mathbb X\times \mathbb X$.   Then similar to the discussion at the beginning of the proof of \cite[Theorem 1.1(i)]{BaiLi2017}, we assume that  w.l.o.g., $u_i<u_i^*$, $v_i>v_i^*$.

Denote
$$
\phi_i=\frac{u_i-u^*_i}{\|u_i-u^*_i\|_{L^2(\Omega)}+\|v_i-v^*_i\|_{L^2(\Omega)}}<0,\ \psi_i=\frac{v_i-v^*_i}{\|u_i-u^*_i\|_{L^2(\Omega)}+\|v_i-v^*_i\|_{L^2(\Omega)}}>0.
$$
It is routine to check that
\begin{equation}\label{pf-thm-phipsi}
\begin{cases}
\displaystyle d_i\mathcal{K}[\phi_i] + f(x, u_i, v_i)\phi_i  \\
 \ \ \ \  \displaystyle = -u^*_i\left[ \frac{f(x,u_i, v_i)- f(x, u_i^*, v_i)}{u_i- u_i^*}\phi_i + \frac{f(x,u_i^*, v_i)- f(x, u_i^*, v_i^*)}{v_i- v_i^*}\psi_i \right] \\
 \ \ \ \  \displaystyle  =  -u^*_i f_u (x, \xi_i, v_i )\phi_i  -u^*_i f_v (x, u_i^*, \zeta_i )\psi_i  & {\rm in~} \Omega\\
\displaystyle D \mathcal{P}[{\psi}_i]+ g(x,u_i, v_i)\psi_i  \\
\ \ \ \ \displaystyle = - v_i^* \left[  \frac{g(x,u_i, v_i)- g(x, u_i^*, v_i)}{u_i- u_i^*}\phi_i + \frac{g(x,u_i^*, v_i)- g(x, u_i^*, v_i^*)}{v_i- v_i^*}\psi_i \right] \\
 \ \ \ \  \displaystyle  =  -v^*_i g_u (x, \alpha_i, v_i )\phi_i  -v^*_i g_v (x, u_i^*, \beta_i )\psi_i  & {\rm in~} \Omega,
\end{cases}
\end{equation}
where
$$
u_i \leq \xi_i, \alpha_i \leq  u_i^*,\ v_i^* \leq \zeta_i, \beta_i \leq v_i.
$$

For simplicity, denote
$$
k* \phi = \int_{\Omega} k(x,y) \phi(y) dy.
$$
Direct calculation gives that
\begin{eqnarray}\label{thm-pf-c}
0> \phi_i &=& \frac{ -f_v (x, u_i^*, \zeta_i ) u^*_i \psi_i-d_i k* \phi_i}{f(x,u_i ,v_i) + f_u (x, \xi_i, v_i ) u_i^* -d_i } \cdot{u_i\over u_i} \cr
    &=&  \frac{ -f_v (x, u_i^*, \zeta_i ) u_iu^*_i \psi_i-d_i u_i k* \phi_i}{- d_ik*u_i + f_u (x, \xi_i, v_i )  u_iu_i^*  }\cr
  &\geq &  - \frac{f_v (x, u_i^*, \zeta_i )}{f_u (x, \xi_i, v_i )}\psi_i +   \left( {k*u_i\over u_i}  -  f_u (x, \xi_i, v_i ) {u_i^*\over d_i} \right)^{-1}  k* \phi_i.
\end{eqnarray}
On the other hand, notice that
\begin{eqnarray*}
0 & =& \sup \left\{\textrm{Re}\, \lambda\, |\, \lambda\in \sigma(D\mathcal{P}+ g(x, u_i, v_i))   \right\} \\
      &=&   \sup_{\phi\in C(\bar\Omega),\, \phi>0 \ \textrm{on} \ \bar\Omega} \inf_{x\in\Omega}   {D \mathcal{P}[\phi]  + g(x, u_i, v_i) \phi   \over \phi }  \\
      &\geq & \inf_{x\in\Omega}   {D \mathcal{P}[\psi_i]  + g(x, u_i, v_i) \psi_i   \over \psi_i }\\
      &=& \inf_{x\in\Omega}  -v_i^* \left[ g_u (x, \alpha_i, v_i ){\phi_i\over \psi_i}  +  g_v (x, u_i^*, \beta_i ) \right].
\end{eqnarray*}
This, together with (\ref{thm-pf-c}), yields that
\begin{eqnarray}\label{pf-thm-key}
0 & \geq &\inf_{x\in\Omega}  \left[  - g_u (x, \alpha_i, v_i )\phi_i -  g_v (x, u_i^*, \beta_i )\psi_i  \right]\cr
&\geq &  \inf_{x\in\Omega}     \left( g_u (x, \alpha_i, v_i ) \frac{f_v (x, u_i^*, \zeta_i )}{f_u (x, \xi_i, v_i )} -  g_v (x, u_i^*, \beta_i )\right) \psi_i \cr
  &&\ \ \ \ \  - g_u (x, \alpha_i, v_i )   \left( {k*u_i\over u_i}  -  f_u (x, \xi_i, v_i ) {u_i^*\over d_i} \right)^{-1}  k* \phi_i.
\end{eqnarray}

We claim that
\begin{equation}\label{pf-thm-obstacle}
\lim_{i\rightarrow \infty } \left( {k*u_i\over u_i}  -  f_u (x, \xi_i, v_i ) {u_i^*\over d_i} \right)^{-1} =0 \ \ \textrm{in} \ \mathbb X.
\end{equation}
Since $\mu_0>0$, by Theorem \ref{thm-asymptotic}, one has
$$
\lim_{i\rightarrow \infty} u_i = U_0 = F_+(x, V_0)  \  \ \textrm{in} \ \mathbb X
$$
and $U_0 \not\equiv 0$ because of Theorem \ref{thm-system-d=0}. Then the extra assumption that $k(x,y)>0$ for $x, y\in \bar\Omega$ indicates that $k* U_0 >0$ in $\bar\Omega$. Hence it follows that
\begin{eqnarray*}
&& \lim_{i\rightarrow \infty } \left( {k*u_i\over u_i}  -  f_u (x, \xi_i, v_i ) {u_i^*\over d_i} \right)^{-1} \leq  \lim_{i\rightarrow \infty } \left( {k*u_i\over u_i}  -  f_u (x, \xi_i, v_i ) {u_i\over d_i} \right)^{-1} \\
&\leq &  \lim_{i\rightarrow \infty } \left(  -  f_u (x, \xi_i, v_i )  {k*u_i\over   d_i} \right)^{-1/2}=0.
\end{eqnarray*}
The claim is proved.

Next, by Theorem \ref{thm-asymptotic} and the assumption \textbf{(A5)}, it is easy to check that
\begin{eqnarray}\label{pf-thm-A5}
&& \lim_{i\rightarrow \infty} \left( g_u (x, \alpha_i, v_i ) \frac{f_v (x, u_i^*, \zeta_i )}{f_u (x, \xi_i, v_i )} -  g_v (x, u_i^*, \beta_i)\right) \cr
&=& g_u (x, U_0, V_0 ) \frac{f_v (x, U_0, V_0  )}{f_u (x, U_0, V_0  )} -  g_v (x, U_0, V_0 ) \geq c_0 >0\ \ \textrm{in} \ \mathbb X,
\end{eqnarray}
where $c_0$ is some constant.

Then (\ref{pf-thm-key}), (\ref{pf-thm-obstacle}) and (\ref{pf-thm-A5}) together imply that for $i$ large enough,   there exists $x_i \in \bar\Omega$ such that
\begin{equation}\label{pf-thm-psi-0}
\lim_{i\rightarrow \infty} \psi_i(x_i) =0.
\end{equation}
This, combined with (\ref{thm-pf-c}) and (\ref{pf-thm-obstacle}), gives that
\begin{equation}\label{pf-thm-phi-0}
\lim_{i\rightarrow \infty} \phi_i(x_i)=0.
\end{equation}

Back to the equation satisfied by $\psi_i$ in  (\ref{pf-thm-phipsi}), it follows from (\ref{pf-thm-psi-0}) and (\ref{pf-thm-phi-0})  that
\begin{equation}\label{pf-thm-p*psi-xi-0}
\lim_{i\rightarrow \infty}  D (p*\psi_i)(x_i) =  0.
\end{equation}
We claim that  {\it there exists a sequence of $\{ \psi_i \}_{i\geq 1} $, still denoted by $\{ \psi_i \}_{i\geq 1} $, such that}
\begin{equation}\label{pf-thm-p*psi-0}
\lim_{i\rightarrow \infty}  p*\psi_i =  0 \ \ \textrm{in}\ \mathbb X.
\end{equation}

Obviously, by passing to a subsequence if necessary, there exist $x_0 \in \bar\Omega$ and $\Psi_0\in \mathbb X$ such that
$$
\lim_{i\rightarrow \infty} x_i = x_0\ \ \textrm{and}\ \ \lim_{i\rightarrow \infty}  p*\psi_i =  \Psi_0 \ \ \textrm{in}\ \mathbb X.
$$
Thus $\Psi_0 (x_0) =0$. Denote $\Omega_0 = \{x\in \bar\Omega \ |\ \Psi_0 (x) =0 \} $. To prove the claim, it suffices to show that $\Omega_0$ is both open and closed in $\bar\Omega$.  According to \textbf{(A0)}, there exist $c>0$ and $\delta>0$ such that
$$
p(x,y) \geq c>0  \ \ \textrm{if}\ x, y \in\bar\Omega, |x-y|\leq \delta.
$$
Hence
$$
0 = \Psi_0 (x_0) = \lim_{i\rightarrow \infty}  (p*\psi_i )(x_0)\geq c\lim_{i\rightarrow \infty} \int_{\{x\in \bar\Omega, |x-x_0|\leq \delta \}}\psi_i(x)d x \geq 0,
$$
which yields that
$$
\lim_{i\rightarrow \infty} \psi_i =0\ \ a.e. \ \textrm{in}\ \{x\in \bar\Omega, |x-x_0|\leq \delta \}.
$$
Moreover, based on the equations satisfied by $v_i$ and $\psi_i$ respectively, it is routine to check that
\begin{eqnarray*}
 p*\psi_i &=&  \left( {p*v_i \over v_i} - {g_v(x, u_i^*, \beta_i ) \over D} v_i^*\right) \psi_i -  {g_u(x, \alpha_i, v_i )\over D}v_i^*\phi_i \\
  &\leq&  \left( {p*v_i \over v_i} - {g_v(x, u_i^*, \beta_i ) \over D} v_i^*\right) \psi_i,
\end{eqnarray*}
which yields that
$$
\lim_{i\rightarrow \infty} p* \psi_i =0\ \ a.e. \ \textrm{in}\ \{x\in \bar\Omega, |x-x_0|\leq \delta \}.
$$
Hence $\Psi_0 = 0$ in $\{x\in \bar\Omega, |x-x_0|< \delta \}$ and thus $\Omega_0$ is open. On the other side, the closedness  of  $\Omega_0$  is obvious. (\ref{pf-thm-p*psi-0}) is verified.

At the  end,  based on the equations satisfied by $v_i$ and $\psi_i$ again, we derive that
\begin{eqnarray*}
&& Dv_i p* \psi_i = D\psi_i p* v_i    -v_iv^*_i g_u (x, \alpha_i, v_i )\phi_i  -v_iv^*_i g_v (x, u_i^*, \beta_i )\psi_i\\
&\geq &  \left[D  p* v_i + v_iv^*_i\left( g_u (x, \alpha_i, v_i )\frac{f_v (x, u_i^*, \zeta_i )}{f_u (x, \xi_i, v_i )} - g_v (x, u_i^*, \beta_i )\right) \right] \psi_i\\
&&  -v_iv^*_i g_u (x, \alpha_i, v_i )  \left( {k*u_i\over u_i}  -  f_u (x, \xi_i, v_i ) {u_i^*\over d_i} \right)^{-1}  k* \phi_i.
\end{eqnarray*}
Then thanks to \textbf{(A5)}, Theorem \ref{thm-asymptotic}, (\ref{pf-thm-obstacle}) and (\ref{pf-thm-p*psi-0}), one immediately sees that
$$
\lim_{i\rightarrow \infty} \psi_i =0\ \ \ \textrm{in}\ L^{\infty}(\Omega),
$$
which, combined with (\ref{thm-pf-c}) and (\ref{pf-thm-obstacle}), implies that
$$
\lim_{i\rightarrow \infty} \phi_i =0\ \ \ \textrm{in}\ L^{\infty}(\Omega).
$$
This is a contradiction to the definitions of $\phi_i$ and $\psi_i$.
\end{proof}

\end{document}